\newtheorem{thm}{Theorem}[section]
\newtheorem{lem}[thm]{Lemma}
\newtheorem{rem}{Remark}
\renewcommand{}
\newcommand{\bI}{\mathbb{I}}
\newcommand{\nn}{\nonumber}
\newcommand{\ttau}{\Delta t}
\def\epsilon{\varepsilon} 
\newcommand{\mat}[1]{\boldsymbol{#1}}
\begin{document}
\begin{frontmatter}
\title{
Energy-stable parametric finite element approximations for regularized solid-state dewetting in strongly
anisotropic materials
}

\author[1]{Meng Li}
\address[1]{School of Mathematics and Statistics, Zhengzhou University,
Zhengzhou 450001, China.}
\ead{This author's research was supported by National Natural Science Foundation of China (No. 11801527), the China Postdoctoral Science Foundation (No. 2023T160589).
Corresponding author: limeng@zzu.edu.cn. }
\author[1]{Chunjie Zhou}
%\ead{519018323@qq.com}

\begin{abstract}
In this work, we aim to develop energy-stable parametric finite element approximations for a sharp-interface model with strong surface energy anisotropy, which is derived from the first variation of an energy functional composed of film/vapor interfacial energy, substrate energy, and regularized Willmore energy. By introducing two geometric relations, we innovatively establish an equivalent regularized sharp-interface model and further construct an energy-stable parametric finite element algorithm for this equivalent model. We provide a detailed proof of the energy stability of the numerical scheme, addressing a gap in the relevant theory. Additionally, we develop another structure-preserving parametric finite element scheme that can preserve both area conservation and energy stability. Finally, we present several numerical simulations to show accuracy and efficiency as well as some structure-preserving properties of the proposed numerical methods. More importantly, extensive numerical simulations reveal that our schemes provide better mesh quality and are more suitable for long-term computations. 
\end{abstract}
%%%%% end %%%%%%%%%%%

%%%%% Keywords %%%%%%%%%%%

\begin{keyword} Regularized solid-state dewetting, surface diffusion, contact line migration, parametric finite element method, strongly anisotropic surface energy, energy stability 
\end{keyword}

%\begin{AMS}
%74H15, 74S05, 74M15, 65Z99
%\end{AMS}

\end{frontmatter}

\pagestyle{myheadings} \markboth{~}%W.~Bao and Q.~Zhao}
{}

\section{Introduction}

Solid-state dewetting is a widely observed phenomenon in physics and materials science.
It occurs in solid–solid
–vapor systems, and could be used to describe the agglomerative process of solid thin films on a substrate. 
Due to the effects of surface tension and capillarity, a solid film adhered to the substrate is inherently unstable or metastable in its as-deposited state. This instability can lead to complex morphological evolutions, such as edge retraction \cite{Wong00,dornel2006surface,hyun2013quantitative}, faceting \cite{Jiran90,Jiran92,Ye10a}, pinch-off events \cite{Jiang12,Kim15} and fingering instabilities \cite{Kan05,Ye10b,Ye11a,Ye11b}. 
The phenomenon, known as solid-state dewetting, is named as such because the thin film remains solid throughout the process \cite{Thompson12}. 
Recently, solid-state dewetting has found widespread applications in various modern technologies, see \cite{Mizsei93,Armelao06,Schmidt09} and the references therein. The significant and diverse applications of solid-state dewetting have sparked growing interest and efforts to comprehend its underlying mechanism, including the experimental investigations in \cite{Jiran90,Jiran92,Ye10a,Ye10b,Ye11a,Amram12,Rabkin14,Herz216,Naffouti16,Naffouti17,Kovalenko17}, as well as theoretical studies documented in \cite{Wong00,Dornel06,hyun2013quantitative,Jiang12,Jiang16,Kim15,Kan05,Srolovitz86a,Srolovitz86,Wang15,baojcm2022,Bao17,Bao17b,Zucker16}.

Several solid-state dewetting models have been proposed for isotropic surface energy cases \cite{Srolovitz86,Wong00,Dornel06,Jiang12,Zhao20}. However, the kinetic evolution during solid-state dewetting is significantly affected by crystalline anisotropy, as demonstrated by numerous experiments \cite{Thompson12,Leroy16}.
In recent years, various approaches have been explored to theoretically investigate the effects of surface energy anisotropy on the solid-state dewetting (refer to \cite{Dornel06,Pierre09b,Dufay11,Klinger11shape,Zucker13,Bao17,Jiang16,Wang15,Jiang19a,Zhao19b} and the references therein).

We first review the sharp-interface model for the solid-state dewetting with isotropic/weakly anisotropic surface energies. Let $\Gamma:=\Gamma(t)$ be an open curve with two triple (or contact) points $x_c^l:=x_c^l(t)$ and $x_c^r:=x_c^r(t)$. 
Denote $\vec{n}=(-\sin\theta, \cos\theta)^T$ as the unit outward normal vector, where $\theta \in [-\pi, \pi]$ is the angle between the unit outward normal and vertical axis. Let $\vec{\tau} = \vec{n}^\perp$ be the unit tangential vector, and $\vec{n}^\perp$ denotes the clockwise rotation of $\vec{n}$ by $90$ degrees.
Consider the total interfacial energy 
\begin{equation}
 \label{eq:InterEg1}
W(\Gamma)=\int_{\Gamma_{FV}}\gamma_{FV}d\Gamma_{FV}+\int_{\Gamma_{FS}}\gamma_{FS}d\Gamma_{FS}+\int_{\Gamma_{VS}}\gamma_{VS}d\Gamma_{VS}, 
\end{equation}
where $\Gamma_{FV}:=\Gamma$, $\Gamma_{FS}$, and $\Gamma_{VS}$ denote the film/vapor, film/substrate, and vapor/substrate interfaces, respectively, with corresponding surface energy densities $\gamma_{FV}$, $\gamma_{FS}$, and $\gamma_{VS}$. 
In the context of solid-state dewetting  problems, the surface energy densities $\gamma_{FS}$ and $\gamma_{VS}$ are typically assumed to be constant values. On the other hand, $\gamma_{FV}=\gamma(\theta)\in C^2([-\pi, \pi])$ depends on the orientation of the film/vapor interface. Consequently, the total interfacial energy \eqref{eq:InterEg1} can be expressed in the following simplified dimensionless form:
\begin{align}
 \label{eq:InterEg2}
W(\Gamma)=\int_{\Gamma}\gamma(\theta)ds-\sigma(x_c^r-x_c^l), 
 \end{align}
where $\sigma=\frac{\gamma_{VS}-\gamma_{FS}}{\gamma_0}$ is the dimensionless material constant with $\gamma_0$ denoting a dimensionless unit of surface energy density.
 
Define a time independent variable $\rho\in \mathbb I=[0, 1]$, and then we parameterize the curve $\Gamma(t)$ as  
 \begin{align}
 \label{eq:Interfacep}
 \Gamma(t):=\vec X(\rho,~t)=\big(x(\rho,t),~y(\rho,~t)\big)^T: 
 \mathbb I\times [0, T]\rightarrow \mathbb{R}^2.
 \end{align}
The arclength parameter $s$ can be computed by $s(\rho, t)=\int_0^\rho|\partial_q\vec{X}|dq$, and there holds  $\partial_\rho s=|\partial_\rho\vec{X}|$. 
By calculating variations with respect to $\Gamma$ and the two contact points $x_c^l$ and $x_c^r$, the following sharp-interface model is constructed for the solid-state dewetting of thin films with isotropic/weakly anisotropic surface energies \cite{Bao17}

 \begin{subequations}
 \label{eqn:model}
 \begin{align}
 \label{eq:model1}
 &\partial_t\vec X = \partial_{ss}\mu\vec{n},\quad 0<s<L(t),\quad t>0,\\
&\mu=\left[\gamma(\theta)+\gamma''(\theta)\right]\kappa,\qquad \kappa = -(\partial_{ss}\vec X)\cdot\vec n,
 \label{eq:model2}
 \end{align}
\end{subequations}
where $\mu$ is the chemical potential, $L(t)=\int_{\Gamma(t)}ds$ is the perimeter of $\Gamma(t)$, and $\kappa:=\kappa(s, t)$ is the curvature of the interface curve. 
The initial 
curve is defined by  
\begin{align}
\label{eq:modelinitial}	
\vec{X}(s, 0):=\vec{X}_0(s)=(x_0(s), y_0(s))^T,\quad 0\leq s\leq L_0:=L(0),
\end{align}
satisfying $y_0(0)=y_0(L_0)=0$ and $x_0(0)<x_0(L_0)$, and the boundary conditions are given by
\begin{itemize}
    \item [(i)] contact line condition
 \begin{align}
 \label{eq:bd1_2d}
 y(0, t) = y(L, t)=0,\qquad t\geq 0;
 \end{align}
 \item [(ii)] relaxed contact angle condition
 \begin{align}
 \label{eq:bd2_2d}
 \frac{dx_c^l}{dt}=\eta f(\theta_d^l; \sigma),\quad 
 \frac{dx_c^r}{dt}=-\eta f(\theta_d^r; \sigma),\qquad t\geq 0;
 \end{align}
 \item [(iii)] zero-mass flux condition
\begin{align}
\label{eq:bd3_2d}
\partial_s\mu(0, t)=0,\quad \partial_s\mu(L, t)=0,\quad t\geq 0,
\end{align}
\end{itemize}
where $\eta\in (0, \infty)$ represents the contact line mobility, 
$\theta_d^l$ and $\theta_d^r$ denote the dynamic contact angles at the left and right contact points, respectively, 
and 
\begin{align}\label{eq:bd_f}
    f(\theta; \sigma) 
    =\gamma(\theta)\cos\theta-\gamma'(\theta)\sin\theta-\sigma,\qquad \theta\in [-\pi, \pi],
\end{align}
with $f(\theta; \sigma)=0$ being the anisotropic Young equation \cite{Wang15}.
The contact line condition ensures that the contact points always move along the substrate, the relaxed contact angle condition allows for the relaxation of the contact angle, and the zero-mass flux condition ensures the conservation of the total area or mass of the thin film.

For the isotropic surface energy, there holds 
$\gamma(\theta)\equiv 1$ , and for the weakly anisotropic surface energy, it satisfies 
$\widetilde\gamma(\theta):=\gamma(\theta)+\gamma''(\theta)>0$ for all $\theta \in [-\pi, \pi]$. If  $\widetilde\gamma(\theta)<0$ for some orientations $\theta\in [-\pi, \pi]$, it is regarded to be strongly anisotropic. In such cases, there exist sharp corners in the equilibrium shape, and the sharp-interface governing model \eqref{eqn:model} turns ill-posedness. One of the efficient methods is to add 
some regularization terms, to make the sharp-interface model well-posedness. 
From \cite{Bao17,Jiang19a}, a Willmore energy regularization term was added in the original interfacial energy $W(\Gamma)$, given by 
\begin{align}
\label{eq:InterEg3}
W_{reg}^{\epsilon}(\Gamma):=W(\Gamma)+\epsilon^2W_{wm}(\Gamma)=
\int_\Gamma\left[\gamma(\theta)+\frac{\varepsilon^2}{2}\kappa^2\right]ds-\sigma(x_c^r-x_c^l),
\end{align}
where $0<\varepsilon\leq 1$ is a small regularization parameter and $W_{wm}(\Gamma)=\frac12\int_\Gamma\kappa^2ds$ is the Willmore energy.

%%%%%%%%% Meng Li 

Through obtaining the first variation of the surface energy function \eqref{eq:InterEg3} with respect to $\Gamma$ and the two contact points $x_c^l$ and $x_c^r$,  
a dimensionless regularized sharp-interface model for solid-state dewetting  of thin films with strongly anisotropic surface energy is derived: 
 \begin{subequations}
 \label{eqn:modelstr}
 \begin{align}
 \label{eq:model1str}
 &\partial_t\vec X = \partial_{ss}\mu\vec{n},\quad 0<s<L(t),\quad t>0,\\
&\mu=\left[\gamma(\theta)+\gamma''(\theta)\right]\kappa-\varepsilon^2\bigg(\partial_{ss}\kappa+\frac{1}{2}\kappa^3\bigg),\qquad \kappa = -(\partial_{ss}\vec X)\cdot\vec n.\label{eq:model2str}
 \end{align}
\end{subequations}
The initial curve is determined by \eqref{eq:modelinitial}, along with the boundary conditions provided below:
\begin{itemize}
    \item [(i)] contact line condition
 \begin{align}
 \label{eq:bd1_str}
 y(0, t) = 0,\quad y(L, t)=0,\quad t\geq 0;
 \end{align}
 \item [(ii)] relaxed contact angle condition
 \begin{align}
 \label{eq:bd2_str}
 \frac{dx_c^l}{dt}=\eta f^\epsilon(\theta_d^l; \sigma),\quad 
 \frac{dx_c^r}{dt}=-\eta f^\epsilon(\theta_d^r; \sigma),\qquad t\geq 0;
 \end{align}
 \item [(iii)]  zero-mass flux condition
\begin{align}
\label{eq:bd3_str}
\partial_s\mu(0, t)=0,\quad \partial_s\mu(L, t)=0,\quad t\geq 0;
\end{align}
\item [(iv)] zero-curvature condition
\begin{align}
\label{eq:bd4_str}
\kappa(0, t)=0,\quad \kappa(L, t)=0,\quad t\geq 0,
\end{align}
\end{itemize}
where the function $f^\epsilon(\theta; \sigma)$ is defined by 
\begin{align}\label{eq:bd_f_ep}
    f^\epsilon(\theta; \sigma) 
    =f^\epsilon(\theta; \sigma)-\epsilon^2\partial_s\kappa\sin\theta=\gamma(\theta)\cos\theta-\gamma'(\theta)\sin\theta-\sigma-\epsilon^2\partial_s\kappa\sin\theta,\qquad \theta\in [-\pi, \pi],
\end{align}
satisfying $\lim\limits_{\epsilon\rightarrow 0^+}f^\epsilon(\theta; \sigma)=f(\theta; \sigma)$.
Although the authors in \cite{Bao17,Jiang19a} have introduced the above regularized system, constructing an energy-stable parametric finite element approximation for the regularized sharp-interface model  remains an open problem. Additionally, apart from addressing the well-posedness of the model, other advantages of the regularization system have not yet been discovered, making it unclear in what other aspects the regularized model is superior to the original model. These two issues constitute the two important components of this paper. 
Most recently, Bao and Li introduced two new geometric identities in \cite{bao2024energy}, leading to the development of an energy-stable parametric finite element approximation for the planar Willmore flow. These geometric identities happen to be particularly useful for constructing the energy-stable algorithm for the regularized sharp-interface model \eqref{eqn:modelstr}. 
In this work, by introducing two types of novel surface energy matrices with related to the variable $\theta$, together with the two new geometric identities, 
we build the equivalent system of the regularized sharp-interface model \eqref{eqn:modelstr}. 
These matrices are slightly different from those in \cite{li2021energy,bao2023symmetrized, bao2023symmetrized1}, because all the functions in these matrices are related to the variable \(\theta\), 
and also different from one in \cite{li2021energy} as we add a stabilized term in each matrix in order 
to derive the energy stability of the parametric finite element method (PFEM). 
Based on the equivalent system and its weak formulation, we develop an energy-stable parametric finite element approximation with a theoretically proven energy stability law, addressing a significant gap in the current research. Furthermore, by adopting the techniques from \cite{bao2021structure}, we also construct a structure-preserving parametric finite element scheme that preserves both the area conservation and energy stability. 

However, the evolution meshes generated by the energy-stable scheme proposed in \cite{bao2024energy} fail to maintain uniform distribution, which may result in degenerate meshes over long-time simulations. This naturally raises concerns about whether applying this technique to strongly anisotropic solid-state dewetting models might lead to even worse mesh degeneration. But fortunately, through extensive numerical experiments, we demonstrate that this concern is unnecessary. The mesh quality produced by the numerical scheme with Willmore regularization is significantly better than that without it \cite{li2023symmetrized}, making it more suitable for long-term simulations. This constitutes the second main innovation of this work.
Mesh quality is a crucial indicator of the effectiveness of numerical schemes. Currently, the numerical methods that can produce high-quality meshes include the BGN methods developed in 
\cite{Barrett07,Barrett08JCP,Barrett2011,barrett2008parametric,Barrett20}, the DeTurck techniques
introduced in \cite{m2017approximations}, and the artificial tangential velocity defined in \cite{hu2022evolving} and Duan-Li's method \cite{duan2024new}. 

The rest of the paper is organized as follows. In Section \ref{sec2}, we present a unified surface energy matrix and derive a novel geometric system. In Section \ref{sec3}, we derive a new variational formula, and demonstrate its area conservation and energy dissipation laws. In Section \ref{sec4} and \ref{sec5}, We introduced the parametric finite element approximation and the solution process. In Section \ref{sec6}, we present extensive numerical tests to demonstrate the accuracy, structure-preserving properties, and efficiency of the energy-stable algorithms. Finally, we draw some conclusions in Section \ref{sec7}.

%structure-preserving PFEMs are proposed. In Section \ref{sec5}, we propose two novel PFEMs that enhance the quality of the mesh. In Section \ref{sec6}, a large number of numerical tests are conducted to demonstrate the validity of the proposed theory. Finally, we come to some conclusions in Section \ref{sec7}. 

\section{A new geometric system}\label{sec2}
Define $\mat{B}_q(\theta)$ as 
\begin{equation}\label{Matrix:Bqx}
\mat{B}_{q}(\theta)=
\begin{pmatrix}
\gamma(\theta)&-\gamma'(\theta) \\
\gamma'(\theta)&\gamma(\theta)
\end{pmatrix}
\begin{pmatrix}
\cos2\theta&\sin2\theta \\
\sin2\theta&-\cos2\theta
\end{pmatrix}^{1-q}+
\mathscr S(\theta)\left[\frac{1}{2}\mat{I}-\frac{1}{2}\begin{pmatrix}
\cos2\theta&\sin2\theta \\
\sin2\theta&-\cos2\theta
\end{pmatrix}\right], \qquad q=0, 1,
\end{equation}
where $\mat I$ is a $2\times 2$ identity matrix and $\mathscr S(\theta)$ is a stability function.  For different values of $q$, the matrix includes the following two cases:
    \begin{itemize}
        \item [\textbf{1)}] For $q=0$,  $\mat{B}_0(\theta)$ is a symmetric matrix. Moreover, if $\gamma(\theta)=\gamma(\pi+\theta)$, we can prove that the matrix $\mat{B}_0(\theta)$ is also 
   positive definite. 
        \item [\textbf{2)}] For $q=1$, $\mat{B}_1(\theta)$ is not symmetric, and can be split into the symmetric positive matrix $\overline{\mat{B}}_1(\theta)$ and anti-symmetric matrix $\underline{\mat{B}}_1(\theta)$:
        \begin{align*}
\overline{\mat{B}}_1(\theta)    =\begin{pmatrix}
\gamma(\theta)&0 \\
0&\gamma(\theta)
\end{pmatrix}+
\mathscr S(\theta)\begin{pmatrix}
\frac{1-\cos2\theta}{2}&-\frac{1}{2}\sin2\theta \\
-\frac{1}{2}\sin2\theta&\frac{1+\cos2\theta}{2}
\end{pmatrix},
\qquad 
\underline{\mat{B}}_1(\theta)    =\begin{pmatrix}
0&-\gamma'(\theta) \\
\gamma'(\theta)&0
\end{pmatrix}.
        \end{align*}
    \end{itemize}
From \cite{bao2023symmetrized, bao2023symmetrized1,li2024structure}, we can prove 
\begin{align}\label{eqn:geoa}
\left[\gamma(\theta)+\gamma''(\theta)\right]\kappa\vec n=   
-\partial_s\left[ \mat{B}_{q}(\theta)\partial_s\vec{X}\right].
\end{align}
In addition, from \cite{bao2024energy}, we have the geometric relations:
\begin{align}
   & \left[\partial_{ss}\kappa+\frac12\kappa^3\right]\vec n=\partial_s\left[\partial_s\kappa\vec n-\frac12\kappa^2\partial_s\vec X\right],\label{eqn:geob}\\
   &\partial_t\kappa = -\partial_s\left[\vec n \cdot \partial_s \partial_t\vec X\right]- \left[\partial_s \vec X\cdot\partial_s\partial_t\vec X\right]\kappa.\label{eqn:geoc}
\end{align}
Therefore, the regularized sharp-interface model  \eqref{eqn:modelstr}  is equivalent to 
the following conservative form:
 \begin{subequations}
 \label{eqn:model2str}
 \begin{align}
  \label{eq:model4str}
 &\vec{n}\cdot\partial_t\vec X = \partial_{ss}\mu,\\
 &\mu\vec{n}=-\partial_s\left[ \mat{B}_{q}(\theta)\partial_s\vec{X}+\epsilon^2\left(\partial_s\kappa\vec n - \frac{1}{2}\kappa ^2\partial_s\vec X\right)\right], \label{eq:model5str}\\
& \partial_t\kappa = -\partial_s\left[\vec n \cdot \partial_s \partial_t\vec X\right]- \left[\partial_s \vec X\cdot\partial_s\partial_t\vec X\right]\kappa,
 \label{eq:model6str}
 \end{align}
\end{subequations}
with the initial condition \eqref{eq:modelinitial} and boundary conditions \eqref{eq:bd1_str}-\eqref{eq:bd4_str}. 
For this new equivalent system, we can easily prove its area conservation and energy dissipation laws.

\section{Variational formulation}\label{sec3}

 For the open evolution curve $\Gamma(t)$, we define the functional space 
 \begin{equation}
 \label{eq:L2space}
L^2(\mathbb{I}):=\left\{u: \mathbb I\rightarrow \mathbb R,~\int_{\Gamma(t)}|u(s)|^2ds=\int_{\mathbb I}|u(s(\rho, t))|^2\partial_\rho sd\rho<+\infty\right\},
\end{equation}
equipped with the $L^2$ inner product 
\begin{equation}
\label{eq:L2inner}
(u, v)_{\Gamma(t)}:=\int_{\Gamma(t)}u(s)v(s)ds=\int_{\mathbb I}u(s(\rho, t))v(s(\rho, t))\partial_\rho sd\rho,\quad \forall u, v\in L^2(\mathbb I).
\end{equation}
The above inner product can be extended to $[L^2(\mathbb I)]^2$ by replacing the scalar 
product $uv$ by the vector inner product $\vec{u}\cdot \vec{v}$. Define the Sobolev spaces 
\begin{subequations}
 \begin{align*}
 	&H^1(\mathbb I):=\Big\{u:\mathbb I\rightarrow \mathbb R, u\in L^2(\mathbb I)~ \text{and}~\partial_\rho u\in L^2(\mathbb I)\Big\},\\
 	&H_0^1(\mathbb I):=\Big\{u:\mathbb I\rightarrow \mathbb R, u\in H^1(\mathbb I)~ \text{and}~u(0)=u(1)=0\Big\},
 \end{align*}
\end{subequations}
and denote the space $\mathbb X:=H^1(\mathbb I)\times H_0^1(\mathbb I)$. 
Multiplying a test function $\phi\in H^1(\mathbb I)$ to \eqref{eq:model4str} and integrating over $\Gamma (t)$, employing integration by parts and combining boundary conditions \eqref{eq:bd3_str} and $\partial_\rho s = \left | \partial_\rho \vec X \right |, \text{d}s = \partial_\rho s \text{d}\rho = \left | \partial_\rho \vec X \right |\text{d}\rho$, we can obtain 
\begin{align}\label{bianfeng_1}
    \left(\vec n\cdot\partial_t\vec X, \phi\right)_{\Gamma(t)}
    &=\left(\partial_{ss}\mu,  
\phi\right)_{\Gamma(t)}   \nonumber \\ 
 &=-\left(\partial_s\mu, \partial_s\phi\right)_{\Gamma(t)} + \left(\phi \partial_s\mu  
\right)\bigg|_{\rho=0}^{\rho=1} \nonumber \\
&=-\left(\partial_s\mu, \partial_s \phi\right)_{\Gamma(t)}. 
\end{align}
Multiplying a test function $\vec\omega = \left(\omega_1, \omega_2\right)^\top\in\mathbb X$ to \eqref{eq:model5str} and integrating over $\Gamma(t)$, integrating by parts and noting the boundary conditions \eqref{eq:bd2_str}, \eqref{eq:bd4_str}
and the function $f^\epsilon(\theta; \sigma)$, we derive 
\begin{align}\label{bianfeng_2}
\bigg(\mu\vec n, \vec\omega\bigg)_{\Gamma(t)} &= -\left(\partial_s\left[ \mat{B}_{q}(\theta)\partial_s\vec{X}+\epsilon^2\left(\partial_s\kappa\vec n - \frac{1}{2}\kappa ^2\partial_s\vec X\right)\right], \vec\omega\right)_{\Gamma(t)} \nonumber \\
&= \left(\left[ \mat{B}_{q}(\theta)\partial_s\vec{X}+\epsilon^2\left(\partial_s\kappa\vec n - \frac{1}{2}\kappa ^2\partial_s\vec X\right)\right], \partial_s\vec\omega\right)_{\Gamma(t)} - \left(\left[ \mat{B}_{q}(\theta)\partial_s\vec{X}+\epsilon^2\left(\partial_s\kappa\vec n - \frac{1}{2}\kappa ^2\partial_s\vec X\right)\right]\cdot\vec\omega\right)\bigg|_{\rho=0} ^{\rho=1} \nonumber \\
&=\left( \mat{B}_{q}(\theta)\partial_s\vec{X}, \partial_s\vec\omega\right)_{\Gamma(t)}+ \epsilon^2\left(\left[\partial_s\kappa\vec n - \frac{1}{2}\kappa ^2\partial_s\vec X\right], \partial_s\vec\omega \right)_{\Gamma(t)} \nonumber \\
&~~~~+\frac{1}{\eta}\left[\frac{\text{d}x_c^l}{\text{d}t}\omega_1(0) + \frac{\text{d}x_c^r}{\text{d}t}\omega_1(1)\right] - \sigma\left[\omega_1(1)-\omega_1(0)\right].  
\end{align}
Then, multiplying a test function $\varphi\in H_0^1(\mathbb I)$ to \eqref{eq:model6str} and integrating by parts, we obtain 
\begin{equation}\label{bianfeng_3}
\bigg(\partial_t \kappa, \varphi\bigg)_{\Gamma(t)} = \left(\vec n\cdot\partial_s\partial_t\vec X, \partial_s\varphi\right)_{\Gamma(t)} - \left(\left[ \partial_s \vec X\cdot \partial_s \partial_t \vec X\right]\kappa, \varphi\right)_{\Gamma(t)}. 
\end{equation}
Combining \eqref{bianfeng_1}-\eqref{bianfeng_3}, we get a new variational formulation for the regularized solid-state dewetting \eqref{eqn:model2str} with the boundary conditions \eqref{eq:bd1_str}-\eqref{eq:bd4_str} as follows: Given an an initial curve $\vec{X}(s, 0):=\vec{X}_0(s)=(x_0(s), y_0(s))^T$, find the evolution curves $\vec{X}(\cdot, t)=(x(\cdot, t), y(\cdot, t))^T\in \mathbb X$, chemical potential $\mu(\cdot, t)\in H^1(\bI)$ and curvature $\kappa(\cdot, t)\in H_0^1(\bI)$, such that 
 \begin{subequations}
 \label{eqn:vf}
\begin{align}
    &\left(\vec n\cdot\partial_t\vec X, \phi\right)_{\Gamma(t)}
    +\left(\partial_s\mu, \partial_s \phi\right)_{\Gamma(t)}=0,\qquad \forall \phi\in H^1(\bI),\label{eqn:vfa}\\
    &\bigg(\mu\vec n, \vec\omega\bigg)_{\Gamma(t)}
    -\left( \mat{B}_{q}(\theta)\partial_s\vec{X}, \partial_s\vec\omega\right)_{\Gamma(t)}- \epsilon^2\left(\left[\partial_s\kappa\vec n - \frac{1}{2}\kappa ^2\partial_s\vec X\right], \partial_s\vec\omega \right)_{\Gamma(t)} \nn\\
    &~~~~~~
    -\frac{1}{\eta}\left[\frac{\text{d}x_c^l}{\text{d}t}\omega_1(0) + \frac{\text{d}x_c^r}{\text{d}t}\omega_1(1)\right] + \sigma\left[\omega_1(1)-\omega_1(0)\right]=0,
    \qquad \forall \vec\omega\in \mathbb X, \label{eqn:vfb}\\
    &\bigg(\partial_t \kappa, \varphi\bigg)_{\Gamma(t)} - \left(\vec n\cdot\partial_s\partial_t\vec X, \partial_s\varphi\right)_{\Gamma(t)} + \left(\left[ \partial_s \vec X\cdot \partial_s \partial_t \vec X\right]\kappa, \varphi\right)_{\Gamma(t)} = 0, \qquad \forall \varphi\in H_0^1(\bI). 
    \label{eqn:vfc}
\end{align}
\end{subequations}

Denote $A(t)$ as the total area enclosed between the curve $\Gamma(t)$ and the substrate, and let $W_c (t)$ denote the total interfacial energy, which are defined as follows: 
\begin{equation}
A(t):=\int_{\Gamma(t)} y(s, t)\partial_s x(s, t)\text{d}s, \quad
W_c (t):=\int_{\Gamma(t)} \gamma(\theta)\text{d}s + \frac{\varepsilon ^2}{2}\int_{\Gamma(t)} \kappa ^2\text{d}s - \sigma\left[x_c^r(t) - x_c^l(t)\right], \quad t\ge 0. 
\end{equation}
Then we immediately derive the area conservation and energy dissipation of the new variational formulation \eqref{eqn:vf}. 
\begin{lem}
(Area conservation \& Energy dissipation). Let $\left(\vec X(\cdot, t), \mu(\cdot, t),\kappa(\cdot, t)\right)\in\mathbb X \times H^1(\bI) \times H_0^1(\bI) $ be a solution of the variational formulation \eqref{eqn:vf}, then the area $A(t)$ is conservative during the evolution, i.e., 
\begin{align}\label{mass}
&A(t)\equiv A(0) = \int_{\Gamma(0)} y(s, 0)\partial_s x(s, 0)\text{d}s, \quad t\ge 0, 
\end{align}
and the energy $W_c(t)$ is dissipative during the evolution, i.e.,
\begin{align}\label{energy}
&W_c(t)\le W_c(t_1)\le W_c(0)=\int_{\Gamma(0)} \gamma(\theta)\text{d}s + \frac{\varepsilon ^2}{2}\int_{\Gamma(0)} \kappa ^2 \text{d}s - \sigma\left[x_c^r(0) - x_c^l(0)\right], \quad t\ge t_1\ge 0.
\end{align}
\end{lem}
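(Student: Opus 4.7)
The plan is to test each equation in \eqref{eqn:vf} with a well-chosen function so that the resulting identities reproduce $\frac{dA}{dt}$ and $\frac{dW_c}{dt}$ exactly. Area conservation is the quicker half: taking the admissible constant $\phi\equiv 1\in H^1(\bI)$ in \eqref{eqn:vfa} annihilates $\partial_s\phi$ and leaves $(\vec{n}\cdot\partial_t\vec{X},1)_{\Gamma(t)}=0$. A direct parametric calculation---differentiate $A(t)=\int_{\bI} y\,\partial_\rho x\,d\rho$ under the integral, integrate by parts in $\rho$, and kill the endpoint term via \eqref{eq:bd1_str}---identifies this expression with $\frac{dA}{dt}$ using $\vec{n}=(-\partial_\rho y,\partial_\rho x)/|\partial_\rho\vec{X}|$, yielding \eqref{mass} after integration in time.

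For energy dissipation I would select the three admissible test functions $\phi=\mu$ in \eqref{eqn:vfa}, $\vec{\omega}=\partial_t\vec{X}\in\mathbb{X}$ in \eqref{eqn:vfb} (legal because \eqref{eq:bd1_str} forces $\partial_t y(0,t)=\partial_t y(1,t)=0$), and $\varphi=\epsilon^2\kappa\in H_0^1(\bI)$ in \eqref{eqn:vfc} (legal by \eqref{eq:bd4_str}). Equating the two expressions for $(\mu\vec{n},\partial_t\vec{X})$ produced by the first two choices gives a master identity of the form
\begin{align*}
&(\mat{B}_q(\theta)\partial_s\vec{X},\partial_s\partial_t\vec{X})_{\Gamma(t)}+\epsilon^2\bigl([\partial_s\kappa\vec{n}-\tfrac12\kappa^2\partial_s\vec{X}],\partial_s\partial_t\vec{X}\bigr)_{\Gamma(t)}\\
&\qquad= -\|\partial_s\mu\|^2_{\Gamma(t)}-\tfrac{1}{\eta}\bigl[(dx_c^l/dt)^2+(dx_c^r/dt)^2\bigr]+\sigma\bigl[dx_c^r/dt-dx_c^l/dt\bigr].
\end{align*}
The next task is to recognise each of the two brackets on the left as a total time derivative of a piece of $W_c$. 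For the Willmore bracket, \eqref{eqn:vfc} tested with $\varphi=\epsilon^2\kappa$, combined with the elementary identity $\frac{d}{dt}\int_\Gamma\kappa^2\,ds=2(\partial_t\kappa,\kappa)_{\Gamma(t)}+(\kappa^2,\partial_s\vec{X}\cdot\partial_s\partial_t\vec{X})_{\Gamma(t)}$, collapses it precisely to $\frac{d}{dt}\bigl[\tfrac{\epsilon^2}{2}\int_\Gamma\kappa^2\,ds\bigr]$. For the anisotropic bracket, integrating by parts via \eqref{eqn:geoa} expresses $(\mat{B}_q\partial_s\vec{X},\partial_s\partial_t\vec{X})_{\Gamma(t)}$ as $\frac{d}{dt}\int_\Gamma\gamma(\theta)\,ds$ plus a contact-point boundary contribution $\mathcal{B}(t)$. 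Combining both with the substrate term $\frac{d}{dt}(-\sigma[x_c^r-x_c^l])$ from $W_c$ should produce
\begin{equation*}
\frac{dW_c}{dt}=-\|\partial_s\mu\|^2_{\Gamma(t)}-\tfrac{1}{\eta}\bigl[(dx_c^l/dt)^2+(dx_c^r/dt)^2\bigr]\le 0,
\end{equation*}
and integrating on $[t_1,t]$ yields \eqref{energy}.

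The main obstacle is the careful bookkeeping of $\mathcal{B}(t)$. The boundary evaluations produce precisely $\gamma(\theta)\cos\theta-\gamma'(\theta)\sin\theta$ at each contact point, multiplied by $dx_c^{l,r}/dt$; these must combine with the substrate constant $-\sigma$ and with the $\epsilon^2\partial_s\kappa\sin\theta$ contribution coming from \eqref{eqn:vfc} tested against $\epsilon^2\kappa$ to reconstruct exactly the function $f^\epsilon(\theta_d^{l,r};\sigma)$ of \eqref{eq:bd_f_ep}. Only after this reconstruction can the relaxed contact-angle law \eqref{eq:bd2_str} be invoked to turn each product $f^\epsilon\cdot(dx_c^{l,r}/dt)$ into the clean non-positive dissipation $-\eta^{-1}(dx_c^{l,r}/dt)^2$. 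Tracking the opposite orientations of the outward tangent at $\rho=0$ versus $\rho=1$ is the subtle sign issue throughout this computation.
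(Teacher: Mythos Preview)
Your proposal is correct and shares the same test-function choices as the paper: $\phi=\mu$ in \eqref{eqn:vfa}, $\vec\omega=\partial_t\vec X$ in \eqref{eqn:vfb}, and the evolution law for $\kappa$ (you invoke it through \eqref{eqn:vfc} with $\varphi=\epsilon^2\kappa$, the paper invokes the same identity in strong form). The area argument is identical.

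The one place you diverge from the paper---and overcomplicate matters for yourself---is the ``main obstacle'' paragraph. In your weak-form route there is \emph{no} boundary bookkeeping left to do: the contact-point contributions were already absorbed into the $\tfrac1\eta[\cdots]$ and $\sigma[\cdots]$ terms when \eqref{eqn:vfb} was derived, so no reconstruction of $f^\epsilon$ is needed. Concretely, you should \emph{not} integrate the anisotropic bracket by parts via \eqref{eqn:geoa}. Instead use the pointwise identity $\mat B_q(\theta)\partial_s\vec X=\gamma(\theta)\vec\tau+\gamma'(\theta)\vec n$ (the stabilizing block $\mathscr S(\theta)[\tfrac12 I-\tfrac12 M]$ annihilates $\vec\tau$), together with $\partial_t\theta=\vec n\cdot\partial_s\partial_t\vec X$ and $\partial_t(ds)=(\vec\tau\cdot\partial_s\partial_t\vec X)\,ds$, to obtain directly
\[
(\mat B_q(\theta)\partial_s\vec X,\partial_s\partial_t\vec X)_{\Gamma(t)}
=\int_0^1\bigl[\gamma(\theta)\vec\tau+\gamma'(\theta)\vec n\bigr]\cdot\partial_\rho\partial_t\vec X\,d\rho
=\frac{d}{dt}\int_\Gamma\gamma(\theta)\,ds,
\]
a pure bulk identity with $\mathcal B(t)\equiv 0$. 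Likewise your Willmore computation via \eqref{eqn:vfc} produces no boundary term (since $\kappa\in H_0^1$), so the $\epsilon^2\partial_s\kappa\sin\theta$ piece never appears. Your master identity then reads, after moving the $\sigma$ term to the left,
\[
\frac{dW_c}{dt}=-\|\partial_s\mu\|_{\Gamma(t)}^2-\frac1\eta\Bigl[\Bigl(\frac{dx_c^l}{dt}\Bigr)^2+\Bigl(\frac{dx_c^r}{dt}\Bigr)^2\Bigr]\le 0,
\]
with no further sign-tracking required. The paper instead differentiates $W_c(t)$ directly and \emph{does} encounter the contact-point terms, which it then eliminates via \eqref{eq:bd2_str} and \eqref{eq:bd4_str} before arriving at the same inequality; your route via the already-processed weak form \eqref{eqn:vfb} is in fact slightly cleaner.
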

\begin{proof}
Differentiaing $A(t)$ with respect to $t$, employing integration by parts and combining the boundary conditions, we have
\begin{align}
\frac{\text{d}}{\text{d}t}A(t) 
&= \frac{\text{d}}{\text{d}t}\int_{0}^{L(t)} y(s, t)\partial_s x(s, t)\text{d}s = \int_{0}^{1}\left(\partial_t y\partial_\rho x + y\partial_t\partial_\rho x \right)\text{d}\rho \nonumber \\
&= \int_{0}^{1}\left(\partial_t y\partial_\rho x - \partial_\rho y \partial_t x \right)\text{d}\rho + \left(y\partial_t x\right)\bigg|_{\rho = 0}^{\rho = 1} \nonumber \\
&= \int_{\Gamma(t)}\partial_t \vec X\cdot\vec n\text{d}s = \left(\partial_t\vec X, \vec n\right)_{\Gamma(t)}, \quad t\ge 0.
\end{align}
Selecting $\phi = 1$ in \eqref{eqn:vfa} , we can get
\begin{equation}
\frac{\text{d}}{\text{d}t}A(t) = \left(\partial_t \vec X, \vec n\right)_{\Gamma(t)} = -\left(\partial_s \mu, \partial_s 1\right)_{\Gamma(t)} = 0, \quad t\ge 0, 
\end{equation}
which implies the area conservation given in \eqref{mass}. 

Additionally, differentiating the energy $W_c(t)$ with respect to $t$, using integration by parts and noting the relations 
\begin{align}
&\partial_t \theta = -\frac{\left(\partial_\rho\vec X\right)^\bot\cdot\partial_t\partial_\rho\vec X}{\left | \partial_\rho\vec X \right | ^2}, \quad \partial_t\partial_\rho s = \frac{\partial_\rho\vec X\cdot\partial_t\partial_\rho\vec X}{\left | \partial_\rho\vec X \right |}, \quad \partial_\rho\theta = -\kappa\partial_\rho s, 
\nonumber \\
&\partial_t \kappa = -\partial_s\left(\vec n\cdot\partial_s\partial_t\vec X\right) - \left(\partial_s\vec X\cdot\partial_s\partial_t\vec X\right)\kappa,  \nonumber 
\end{align}
by virtue of \eqref{eqn:geoa} and \eqref{eqn:geob}, we can get
\begin{align}
\frac{\text{d}}{\text{d}t}W_c(t)
&= \frac{\text{d}}{\text{d}t}\int_{\Gamma(t)} \gamma(\theta)\text{d}s + \frac{\varepsilon ^2}{2}\frac{\text{d}}{\text{d}t}\int_{\Gamma(t)} \kappa ^2\text{d}s - \sigma\left[\frac{x_c^r(t)}{\text{d}t} - \frac{x_c^l(t)}{\text{d}t}\right] \nonumber \\
&= \int_{0}^{1}\gamma '(\theta)\partial_t \theta\partial_\rho s\text{d}\rho + \int_{0}^{1}\gamma(\theta)\partial_t\partial_\rho s\text{d}\rho + \varepsilon ^2\left(\int_{0}^{1}\kappa\partial_t\kappa\partial_\rho s\text{d}\rho + \int_{0}^{1}\frac{\kappa ^2}{2}\partial_t \partial_\rho s\text{d}\rho\right) - \sigma\left[\frac{x_c^r(t)}{\text{d}t} - \frac{x_c^l(t)}{\text{d}t}\right] \nonumber \\
&= \int_{0}^{1}\gamma'(\theta)\partial_t\partial_\rho\vec X\cdot\vec n\text{d}\rho + \int_{0}^{1}\gamma(\theta)\partial_t\partial_\rho\vec X \cdot\vec\tau\text{d}\rho  \nonumber \\
&~~~~+ \varepsilon ^2 \left[\int_{\Gamma(t)}\kappa\left(-\partial_s\left(\vec n\cdot\partial_s\partial_t \vec X\right) - \left(\partial_s\vec X \cdot\partial_s\partial_t\vec X\right)\kappa \right)\text{d}s + \int_{\Gamma(t)}\frac{\kappa ^2}{2}\partial_s\partial_t \vec X\cdot\partial_s \vec X \text{d}s\right] - 
\sigma\left[\frac{x_c^r(t)}{\text{d}t} - \frac{x_c^l(t)}{\text{d}t}\right] \nonumber \\
&= -\int_{0}^{1}\partial_t \vec X\cdot\left[\gamma''(\theta)\partial_\rho\theta\vec n + \gamma'(\theta)\partial_\rho\vec n + \gamma'(\theta)\partial_\rho\theta\vec\tau + \gamma(\theta)\partial_\rho\vec\tau\right]\text{d}\rho + \left[\partial_t\vec X\cdot\left(\gamma'(\theta)\vec n + \gamma(\theta)\vec \tau\right)\right]\bigg|_{\rho=0}^{\rho=1} \nonumber \\
&~~~~ + \varepsilon ^2\left(\int_{\Gamma(t)}\left(\partial_s\kappa \vec n - \frac{\kappa ^2}{2}\partial_s \vec X\right)\cdot\partial_s\partial_t \vec X\text{d}s\right) + \varepsilon ^2\kappa\left(\vec n\cdot\partial_s\partial_t\vec X \right)\bigg|_{s=0}^{s=L(t)} - 
\sigma\left[\frac{x_c^r(t)}{\text{d}t} - \frac{x_c^l(t)}{\text{d}t}\right] \nonumber \\
&= \int_{\Gamma(t)}\left(\kappa\left(\gamma(\theta) + \gamma''(\theta)\right) - \varepsilon ^2\left(\partial_{ss}\kappa + \frac{1}{2}\kappa ^3\right)\right)\partial_t \vec X\cdot\vec n\text{d}s - \frac{1}{\eta}\left[\left(\frac{x_c^r(t)}{\text{d}t}\right)^2 + \left(\frac{x_c^l(t)}{\text{d}t}\right)^2\right] \nonumber \\
&=-\int_{\Gamma(t)} \partial_s\left[ \mat{B}_{q}(\theta)\partial_s\vec{X}+\epsilon^2\left(\partial_s\kappa\vec n - \frac{1}{2}\kappa ^2\partial_s\vec X\right)\right]\cdot\partial_t \vec Xds - \frac{1}{\eta}\left[\left(\frac{x_c^r(t)}{\text{d}t}\right)^2 + \left(\frac{x_c^l(t)}{\text{d}t}\right)^2\right]
\nonumber \\
&= \int_{\Gamma(t)}\mu\partial_t\vec X\cdot\vec n\text{d}s - \frac{1}{\eta}\left[\left(\frac{x_c^r(t)}{\text{d}t}\right)^2 + \left(\frac{x_c^l(t)}{\text{d}t}\right)^2\right].
\end{align}
Choosing $\vec\omega = \partial_t\vec X$ in \eqref{eqn:vfb} and $\phi = \mu$ in \eqref{eqn:vfa}, we obtain 
\begin{equation}
\frac{\text{d}}{\text{d}t}W_c(t) = \left(\mu\vec n, \partial_t\vec X\right)_{\Gamma(t)} - \frac{1}{\eta}\left[\left(\frac{x_c^r(t)}{\text{d}t}\right)^2 +  \left(\frac{x_c^l(t)}{\text{d}t}\right)^2\right] = -\left(\partial_s\mu, \partial_s\mu\right)_{\Gamma(t)} - \frac{1}{\eta}\left[
\left(\frac{x_c^r(t)}{\text{d}t}\right)^2 +  \left(\frac{x_c^l(t)}{\text{d}t}\right)^2\right]\le 0, \quad t\ge 0, 
\end{equation}
which implies that the energy is dissipative during the evolution. Therefore, we complete the proof. 
\end{proof}

\begin{rem}
Given that the area conservation and energy dissipation laws of the new variational formulation \eqref{eqn:vf} have been proven in above theorem, it is desirable to construct numerical approximations that also adhere to these two geometric flows at the discrete level.
The constructed variational formulation \eqref{eqn:vf} for the regularized solid-state dewetting model \eqref{eqn:model2str} is crucial for establishing the energy-stable numerical method that will be given in next section. 
\end{rem}
\section{Parametric finite element approximations}\label{sec4}
In this section, we construct two types of energy-stable PFEMs for the proposed new variational formulation \eqref{eqn:vf}. We uniformly divide $[0, T] = \cup _{j = 0}^{M-1}[t_m,  t_{m+1}]$ with time steps $\ttau_m = t_{m} - t_{m-1}$ for $m\geq 1$, and the domain $\mathbb I$ is divided into $\mathbb I = \cup _{j=1}^{J}[q_{j-1}, q_j]$ with the nodes $q_j = jh$ and space stepsizes $h = J^{-1}$. Subsequently, we define the finite element spaces 
\begin{equation}
\mathbb K ^h = \mathbb K ^h (\mathbb I) := \left\{u \in C(\mathbb I):u|_{\mathbb I_j}\in\mathbb P_1, \quad\forall j=1,2,\dots ,J\right\}\subseteq H^1(\mathbb I), \quad \mathbb X^h := \mathbb K^h \times \mathbb K_0^h, \quad \mathbb K_0^h := \mathbb K^h \cap H_0^1(\mathbb I), \nonumber
\end{equation}
which $\mathbb P_1$ denote the space of polynomials with degree at most $1$. 
Let $\Gamma ^m := \vec X ^m\in\mathbb X^h$, $\mu^m\in\mathbb K^h$ and $\kappa ^m\in\mathbb K_0^h$ be the approximations of curve $\Gamma(t_m):=\vec X(\cdot, t_m)\in\mathbb X$, chemical potential $\mu(\cdot, t_m)\in H^1(\bI)$ and curvature $\kappa(\cdot, t_m)\in H_0^1(\bI)$, respectively. The approximation solution $\Gamma ^m$ is made up of the line segments
\begin{equation*}
\vec h_j^m := \vec X^m(\rho_j) - \vec X^m(\rho_{j-1}), \quad j = 1, 2, \dots , J, 
\end{equation*}
with $| \vec h_j^m| $ denoting the length of $\vec h_j^m$. Thus the unit tangential vector $\vec\tau ^m$ and the outward unit normal vector $\vec n ^m$ on interval $\mathbb I_j$ can be computed by 
\begin{equation}
\vec\tau ^m|_{\mathbb I_j} = \frac{\vec h_j^m}{\left | \vec h_j^m \right |} := \vec\tau_j^m, \quad \vec n^m|_{\mathbb I_j} = -\frac{\left(\vec h_j^m\right)^\bot}{\left | \vec h_j^m \right |} := \vec n_j^m. 
\end{equation}
Furthermore, we can define the mass lumped inner product $(\cdot, \cdot)_{\Gamma ^m}^h$ for two functions $\vec u$ and $\vec v$ with possible jumps at the nodes $\left\{\rho_j\right\}_{j=0}^{J}$ as follows. 
\begin{equation}
(\vec u, \vec v)_{\Gamma ^m}^h := \frac{1}{2}h\sum_{j=1}^{J}\left[( \vec u\cdot \vec v)(\rho_j^-)+( \vec u\cdot \vec v)(\rho_{j-1}^+)\right],  
\end{equation}
where $\vec v(\rho_j^\pm ) = \lim_{\rho \to \rho_j^{\pm}} \vec v(\rho)$ for $0\le j\le J$. 

Using the backward Euler method in time, we establish an energy-stable parametric finite element approximation (ES-PFEM): Given the initial curve $\Gamma ^0 \in\mathbb X^h$, $\mu ^0 \in\mathbb K^h$ and $\kappa ^0\in\mathbb K_0^h$, find the evolution curve $\Gamma ^{m+1} := \vec X^{m+1}(\cdot) = (x^{m+1}(\cdot), y^{m+1}(\cdot))^\top\in\mathbb X^h$,  chemical potential $\mu^{m+1}(\cdot)\in\mathbb K^h$ and curvature $\kappa ^{m+1}\in\mathbb K_0^h$, such that
\begin{subequations}\label{ES-PFEM}
\begin{align}
&\left(\frac{\vec X^{m+1}-\vec X^m}{\ttau}\cdot\vec n^m, \phi ^h\right)_{\Gamma ^m}^h + \left(\partial_s\mu ^{m+1}, \partial_s \phi ^h\right)_{\Gamma ^m}^h = 0, \quad \forall\phi ^h\in\mathbb K^h,  \label{ES-PFEM1} \\
&\left(\mu ^{m+1}, \vec n^m\cdot\vec\omega ^h\right)_{\Gamma ^m}^h - \left( \mat{B}_{q}(\theta ^m)\partial_s\vec X^{m+1}, \partial_s\vec\omega ^h\right)_{\Gamma ^m}^h - \varepsilon ^2\left(\partial_s\kappa ^{m+1}\vec n^m - \frac{1}{2}\left(\kappa ^{m+1}\right)^2\partial_s\vec X^{m+1}, \partial_s\vec\omega ^h\right)_{\Gamma ^m}^h \nonumber \\
&~~~~-\frac{1}{\eta}\left(\frac{ x_l^{m+1} - x_l^m}{\ttau}\omega_1^h(0) + \frac{ x_r^{m+1} - x_r^m}{\ttau}\omega_1^h(1)\right) + \sigma\left(\omega_1^h(1) - \omega_1^h(0)\right) = 0, \quad \forall\vec\omega ^h\in\mathbb X^h, 
\label{ES-PFEM2}  \\
&\left(\frac{\kappa ^{m+1} - \kappa ^m}{\ttau}, \varphi ^h\right)_{\Gamma ^m}^h - \left(\vec n^m\cdot\partial_s\left(\frac{\vec X^{m+1} - \vec X^m}{\ttau}\right), \partial_s\varphi ^h\right)_{\Gamma ^m}^h \nonumber \\
&~~~~+ \left(\partial_s \vec X^{m+1}\cdot\partial_s\left(\frac{\vec X^{m+1} - \vec X^m}{\ttau}\right)\kappa ^{m+1}, \varphi ^h\right)_{\Gamma ^m}^h = 0, \quad \forall\varphi ^h\in\mathbb K_0^h.  \label{ES-PFEM3}
\end{align}
\end{subequations}

We can get the energy stability property for the ES-PFEM. 
\begin{thm}\label{thm:energy-stability}
(Energy stability) 
Assume $(\vec X^m, \mu ^m, \kappa ^m)\in(\mathbb X^h, \mathbb K^h, \mathbb K_0^h)$  be the solution of \eqref{ES-PFEM}, and 
define the discretized energy $W_c^m$ enclosed between the curve $\Gamma ^m$ and the substrate as 
\begin{equation}
W_c^m := \sum_{j=1}^{J}\left | \vec h_j^m \right |\gamma(\theta _j^m) + \frac{\varepsilon ^2}{4}\sum_{j=1}^{J}\left | \vec h_j^m \right |\left((\kappa ^m) ^2(\rho_{j-1}^+) + (\kappa ^m) ^2(\rho_j^-)\right) - \sigma\left(x_r^m - x_l^m\right),\qquad m=0,\ldots, M.
\end{equation}
Then there exists a minimal stabilizing function $\mathscr S_0(\theta)$, such that when $\mathscr S(\theta)\geq \mathscr S_0(\theta)$,  the discretized energy $W_c^m$ is dissipative in the sense that  
\begin{equation} \label{w_c^m}
W_c^{m+1}\le W_c^m\le\dots\le W_c^0 = \sum_{j=1}^{J}\left | \vec h_j^0 \right |\gamma(\theta _j^0) + \frac{\varepsilon ^2}{4}\sum_{j=1}^{J}\left | \vec h_j^0 \right |\left((\kappa ^0) ^2(\rho_{j-1}^+) + (\kappa ^0) ^2(\rho_j^-)\right) - \sigma\left(x_r^0 - x_l^0\right), \quad \forall m\ge 0. 
\end{equation}
\end{thm}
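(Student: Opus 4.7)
The plan is to mirror the continuous energy-dissipation proof at the discrete level. I would substitute $\phi^h=\mu^{m+1}$ in \eqref{ES-PFEM1}, $\vec\omega^h=\vec X^{m+1}-\vec X^m$ in \eqref{ES-PFEM2}, and $\varphi^h=\varepsilon^2\kappa^{m+1}$ in \eqref{ES-PFEM3}, rescale the first and third identities by $\ttau$, and sum them. The $(\mu^{m+1}\vec n^m,\vec X^{m+1}-\vec X^m)^h_{\Gamma^m}$ coupling between the first two equations and the $\varepsilon^2(\partial_s\kappa^{m+1}\vec n^m,\partial_s(\vec X^{m+1}-\vec X^m))^h_{\Gamma^m}$ cross-term between the second and third then cancel, yielding the master identity
\begin{align*}
&\bigl(\mat B_q(\theta^m)\partial_s\vec X^{m+1},\partial_s(\vec X^{m+1}-\vec X^m)\bigr)^h_{\Gamma^m} + \varepsilon^2\bigl(\kappa^{m+1}-\kappa^m,\kappa^{m+1}\bigr)^h_{\Gamma^m} \\
&\ + \frac{\varepsilon^2}{2}\bigl((\kappa^{m+1})^2\,\partial_s\vec X^{m+1}\cdot\partial_s(\vec X^{m+1}-\vec X^m),1\bigr)^h_{\Gamma^m} - \sigma\bigl[(x_r^{m+1}-x_r^m)-(x_l^{m+1}-x_l^m)\bigr] \\
&\ = -\ttau\bigl(\partial_s\mu^{m+1},\partial_s\mu^{m+1}\bigr)^h_{\Gamma^m} - \frac{1}{\eta\ttau}\bigl[(x_l^{m+1}-x_l^m)^2+(x_r^{m+1}-x_r^m)^2\bigr].
\end{align*}
Since the right-hand side is manifestly non-positive, it remains to prove that the left-hand side bounds $W_c^{m+1}-W_c^m$ from below.

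The $\sigma$ term on the left already matches the substrate contribution to $W_c^{m+1}-W_c^m$ exactly. For the Willmore part, apply the identity $a(a-b)=\tfrac12(a^2-b^2)+\tfrac12(a-b)^2$ to $\varepsilon^2(\kappa^{m+1}-\kappa^m,\kappa^{m+1})^h_{\Gamma^m}$ and discard the non-negative remainder; then combine with the third term on the left and note that on each segment $\partial_s\vec X^{m+1}$ and $\partial_s(\vec X^{m+1}-\vec X^m)$ are constant with $\partial_s\vec X^{m+1}\cdot\partial_s(\vec X^{m+1}-\vec X^m)=\vec h_j^{m+1}\cdot(\vec h_j^{m+1}-\vec h_j^m)/|\vec h_j^m|^2$. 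A short interval-wise computation reduces the required bound to
$$|\vec h_j^m|^2+\vec h_j^{m+1}\cdot(\vec h_j^{m+1}-\vec h_j^m)-|\vec h_j^m|\,|\vec h_j^{m+1}|\ \ge\ 0,$$
which rewrites as $(|\vec h_j^{m+1}|-|\vec h_j^m|)^2+(|\vec h_j^{m+1}||\vec h_j^m|-\vec h_j^{m+1}\cdot\vec h_j^m)\ge 0$ and follows from Cauchy--Schwarz together with $a^2+b^2\ge 2ab$.

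The main obstacle is the anisotropic minorization
$$\bigl(\mat B_q(\theta^m)\partial_s\vec X^{m+1},\partial_s(\vec X^{m+1}-\vec X^m)\bigr)^h_{\Gamma^m}\ \ge\ \sum_{j=1}^J\Bigl[\gamma(\theta_j^{m+1})\,|\vec h_j^{m+1}|-\gamma(\theta_j^m)\,|\vec h_j^m|\Bigr],$$
which is precisely the role the stabilization function $\mathscr S(\theta)$ plays in \eqref{Matrix:Bqx}. Following the strategy of \cite{bao2023symmetrized,bao2023symmetrized1,li2024structure}, I would write each edge as $\vec h_j^\ell=|\vec h_j^\ell|\vec\tau_j^\ell$ for $\ell\in\{m,m+1\}$, reduce the left-hand side on segment $j$ to a scalar expression in the lengths $|\vec h_j^m|,|\vec h_j^{m+1}|$ and the angles $\theta_j^m,\theta_j^{m+1}$, and Taylor-expand the trigonometric entries of $\mat B_q(\theta_j^m)$ and of $\vec\tau_j^{m+1}$ about $\theta_j^m$ in $\Delta\theta_j:=\theta_j^{m+1}-\theta_j^m$. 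The first-order terms reproduce the target $\gamma(\theta_j^{m+1})|\vec h_j^{m+1}|-\gamma(\theta_j^m)|\vec h_j^m|$, while the second-order remainder is a quadratic form in $\Delta\theta_j$ with coefficient of the form $\tfrac12|\vec h_j^{m+1}|\bigl[\mathscr S(\theta^\ast)-G_q(\theta^\ast)\bigr]$ for some intermediate angle $\theta^\ast$, where $G_q$ is an explicit function of $\gamma,\gamma',\gamma''$ (differing for $q=0$ and $q=1$ because of the antisymmetric part of $\mat B_1$). Taking $\mathscr S_0(\theta):=\max_q G_q(\theta)$ makes the remainder non-negative for every $\mathscr S(\theta)\ge\mathscr S_0(\theta)$, which secures the inequality. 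Combining the anisotropic bound, the Willmore bound, and the substrate identity with the master equation gives $W_c^{m+1}\le W_c^m$, and induction on $m$ then yields \eqref{w_c^m}.
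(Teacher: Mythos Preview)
Your test-function choices, the resulting master identity, the substrate bookkeeping, and the Willmore estimate are all correct and coincide with the paper's argument (the paper organizes the Willmore step via $\vec u\cdot(\vec u-\vec v)\ge\tfrac12(|\vec u|^2-|\vec v|^2)$ followed by AM--GM on $|\vec h_j^m|+|\vec h_j^{m+1}|^2/|\vec h_j^m|\ge 2|\vec h_j^{m+1}|$, which is algebraically equivalent to your edge-wise inequality).

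The gap is in the anisotropic minorization. Your Taylor-expansion sketch is not the paper's route and, as written, does not close. First, the target $\gamma(\theta_j^{m+1})|\vec h_j^{m+1}|-\gamma(\theta_j^m)|\vec h_j^m|$ itself contains all orders of $\Delta\theta_j$ through $\gamma(\theta_j^{m+1})$, so the statement that ``first-order terms reproduce the target'' is not literally true; you would have to expand both sides and compare full remainders, which will involve $\gamma''$ at one intermediate angle and entries of $\mat B_q$ at others. Second, and more seriously, a second-order Taylor remainder only certifies the inequality for $\theta_j^{m+1}$ near $\theta_j^m$; the theorem needs it for \emph{all} $\theta_j^{m+1}\in[-\pi,\pi]$. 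Defining $\mathscr S_0(\theta):=\max_q G_q(\theta)$ from the quadratic coefficient at $\hat\theta=\theta$ yields only a local (infinitesimal) condition, not the global one the paper actually imposes.

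The paper instead treats $q=0$ and $q=1$ separately, invoking global pointwise inequalities from \cite{bao2023symmetrized} and \cite{bao2024structure}. For $q=0$ it uses the symmetry and positive definiteness of $\mat B_0(\theta)$ to get $(\mat B_0(\theta^m)\partial_s\vec X^{m+1},\partial_s(\vec X^{m+1}-\vec X^m))\ge\tfrac12[(\mat B_0\partial_s\vec X^{m+1},\partial_s\vec X^{m+1})-(\mat B_0\partial_s\vec X^m,\partial_s\vec X^m)]$, then combines AM--GM with the global bound $\gamma(\theta)\,[\mat B_0(\theta)\hat\tau]\cdot\hat\tau\ge\gamma(\hat\theta)^2$ for every $\hat\theta$, which is precisely how $\mathscr S_0$ is defined in \eqref{eqn:q0_k0}. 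For $q=1$ it applies the ready-made inequality $\tfrac{1}{|\vec v|}[\mat B_1(\theta)\vec w]\cdot(\vec w-\vec v)\ge|\vec w|\gamma(\hat\theta)-|\vec v|\gamma(\theta)$ from \cite{bao2024structure}, again a condition uniform in $\hat\theta$. Replacing your Taylor argument by either of these global inequalities (and splitting the $q=0$ and $q=1$ cases) is what is needed to complete the proof.
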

\begin{proof} We first focus on the case of $q = 0$. In this case, 
inspired by \cite{bao2023symmetrized},  $\mat{B}_{q}(\theta)$ is a symmetric positive definite matrix if $\mathscr S(\theta)\ge \mathscr S_0(\theta)$, 
with $\mathscr S(\theta)$ denoting a minimal stabilizing function and $\mathscr S_0(\theta)$ defined by 
\begin{equation}\label{eqn:q0_k0}
\mathscr S_0(\theta) = \text{inf}\bigg\{k(\theta)|\gamma(\theta)\mat{B}_{0}(\theta)(\cos\hat{\theta} , \sin(\hat{\theta}))^\top\cdot(\cos\hat{\theta} , \sin(\hat{\theta}))^\top\ge\gamma(\hat{\theta}), \quad\forall\hat{\theta}\in[-\pi, \pi]\bigg\}, \quad\theta\in [-\pi, \pi]. 
\end{equation}
Choosing $\phi ^h = \mu ^{m+1}\ttau$ in \eqref{ES-PFEM1}, $\vec\omega ^h = \vec X^{m+1} - \vec X^m$ in \eqref{ES-PFEM2}, and $\varphi ^h = \varepsilon ^2\kappa ^{m+1}\ttau$ in \eqref{ES-PFEM3}, we have 
\begin{subequations}
\begin{align}
&\left(\left(\vec X^{m+1}-\vec X^m\right)\cdot\vec n^m, \mu ^{m+1}\right)_{\Gamma ^m}^h + \ttau\left(\partial_s\mu ^{m+1}, \partial_s \mu ^{m+1}\right)_{\Gamma ^m}^h = 0, \label{ES-PFEM11} \\
&\left(\mu ^{m+1}, \vec n^m\cdot\left(\vec X^{m+1} - \vec X^m\right)\right)_{\Gamma ^m}^h - \left( \mat{B}_{0}(\theta ^m)\partial_s\vec X^{m+1}, \partial_s\left(\vec X^{m+1} - \vec X^m\right)\right)_{\Gamma ^m}^h - \varepsilon ^2\left(\partial_s\kappa ^{m+1}\vec n^m - \frac{1}{2}\left(\kappa ^{m+1}\right)^2\partial_s\vec X^{m+1}, \partial_s\left(\vec X^{m+1} - \vec X^m\right)\right)_{\Gamma ^m}^h \nonumber \\
&~~~~-\frac{1}{\eta}\left(\frac{ \left(x_l^{m+1} - x_l^m\right)^2}{\ttau} + \frac{\left( x_r^{m+1} - x_r^m \right)^2}{\ttau}\right) + \sigma\left(\left( x_r^{m+1} - x_r^m \right) - \left( x_l^{m+1} - x_l^m \right)\right) = 0,  \label{ES-PFEM22}\\
&\varepsilon ^2\left(\kappa ^{m+1} - \kappa ^m, \kappa ^{m+1}\right)_{\Gamma ^m}^h - \varepsilon ^2\left(\vec n^m\cdot\partial_s\left(\vec X^{m+1} - \vec X^m\right), \partial_s\kappa ^{m+1}\right)_{\Gamma ^m}^h 
+ \varepsilon ^2\left(\partial_s \vec X^{m+1}\cdot\partial_s\left(\vec X^{m+1} - \vec X^m\right)\kappa ^{m+1}, \kappa ^{m+1}\right)_{\Gamma ^m}^h = 0.  \label{ES-PFEM33}
\end{align}
\end{subequations}
Inserting \eqref{ES-PFEM22} and \eqref{ES-PFEM33} into \eqref{ES-PFEM11}, we can obtain 
\begin{align}\label{eqn:proof1}
&\varepsilon ^2\left(\kappa ^{m+1} - \kappa ^m, \kappa ^{m+1}\right)_{\Gamma ^m}^h + \frac{\varepsilon ^2}{2}\left(\left(\kappa ^{m+1}\right)^2\partial_s\vec X^{m+1}, \partial_s\left(\vec X^{m+1} - \vec X^m\right)\right)_{\Gamma ^m}^h + \left( \mat{B}_{0}(\theta ^m)\partial_s\vec X^{m+1}, \partial_s\left(\vec X^{m+1} - \vec X^m\right)\right)_{\Gamma ^m}^h \nonumber \\
&~~~~ - \sigma\left(\left( x_r^{m+1} - x_r^m \right) - \left( x_l^{m+1} - x_l^m \right)\right) = -\ttau\left(\partial_s\mu ^{m+1}, \partial_s \mu ^{m+1}\right)_{\Gamma ^m}^h - \frac{1}{\eta}\left(\frac{ \left(x_l^{m+1} - x_l^m\right)^2}{\ttau} + \frac{\left( x_r^{m+1} - x_r^m \right)^2}{\ttau}\right).
\end{align}
Using the positive definiteness  properties of the matrix $\mat{B}_0(\theta)$, we have 
\begin{align}\label{eqn:proof2}
&\varepsilon ^2\left(\kappa ^{m+1} - \kappa ^m, \kappa ^{m+1}\right)_{\Gamma ^m}^h + \frac{\varepsilon ^2}{2}\left(\left(\kappa ^{m+1}\right)^2\partial_s\vec X^{m+1}, \partial_s\left(\vec X^{m+1} - \vec X^m\right)\right)_{\Gamma ^m}^h \nonumber \\
&~~~~+ \left( \mat{B}_{0}(\theta ^m)\partial_s\vec X^{m+1}, \partial_s\left(\vec X^{m+1} - \vec X^m\right)\right)_{\Gamma ^m}^h 
 - \sigma\left(\left( x_r^{m+1} - x_r^m \right) - \left( x_l^{m+1} - x_l^m \right)\right) \nonumber \\
 &\ge \varepsilon ^2\left(\kappa ^{m+1} - \kappa ^m, \kappa ^{m+1}\right)_{\Gamma ^m}^h + \frac{\varepsilon ^2}{4}\left(\left(\kappa ^{m+1}\right)^2\partial_s\vec X^{m+1}, \partial_s\vec X^{m+1} \right)_{\Gamma ^m}^h + \frac{\varepsilon ^2}{4}\left(\left(\kappa ^{m+1}\right)^2\partial_s\vec X^m, \partial_s\vec X^m \right)_{\Gamma ^m}^h  \nonumber \\
 &~~~~+\frac{1}{2}\left( \mat{B}_{0}(\theta ^m)\partial_s\vec X^{m+1}, \partial_s\vec X^{m+1} \right)_{\Gamma ^m}^h - \frac{1}{2}\left( \mat{B}_{0}(\theta ^m)\partial_s\vec X^m, \partial_s\vec X^m \right)_{\Gamma ^m}^h - \sigma\left(\left( x_r^{m+1} - x_r^m \right) - \left( x_l^{m+1} - x_l^m \right)\right) \nonumber \\ 
 &\ge \frac{\varepsilon ^2}{2}\left(\kappa ^{m+1} , \kappa ^{m+1}\right)_{\Gamma ^m}^h + \frac{\varepsilon ^2}{4}\left(\left(\kappa ^{m+1}\right)^2\partial_s\vec X^{m+1}, \partial_s\vec X^{m+1} \right)_{\Gamma ^m}^h + \frac{\varepsilon ^2}{4}\left(\left(\kappa ^{m+1}\right)^2\partial_s\vec X^m, \partial_s\vec X^m \right)_{\Gamma ^m}^h - \frac{\varepsilon ^2}{2}\left(\kappa ^m , \kappa ^m\right)_{\Gamma ^m}^h  \nonumber \\
 &~~~~+\frac{1}{2}\left( \mat{B}_{0}(\theta ^m)\partial_s\vec X^{m+1}, \partial_s\vec X^{m+1} \right)_{\Gamma ^m}^h - \frac{1}{2}\left( \mat{B}_{0}(\theta ^m)\partial_s\vec X^m, \partial_s\vec X^m \right)_{\Gamma ^m}^h - \sigma\left(\left( x_r^{m+1} - x_r^m \right) - \left( x_l^{m+1} - x_l^m \right)\right) \nonumber \\ 
 &=\varepsilon ^2\sum_{j=1}^{J}\frac{\left | \vec h_j^m \right | + \frac{\left | \vec h_j^{m+1} \right |^2}{\left | \vec h_j^m \right |}}{8}\left(\left(\kappa ^{m+1}\right)^2 \left(\rho_{j-1} \right) + \left(\kappa ^{m+1}\right) ^2 \left(\rho_j\right)\right) - \frac{\varepsilon ^2}{4}\sum_{j=1}^{J}\left | \vec h_j^m \right | \left(\left(\kappa ^m\right)^2 \left(\rho_{j-1} \right) + \left(\kappa ^m\right) ^2 \left(\rho_j\right)\right) 
 \nonumber \\ 
 &~~~~+\sum_{j=1}^{J}\frac{\left(\vec h_j^{m+1} \right)^\top \mat {B}_0(\theta _j^m)\vec h_j^{m+1} + \gamma(\theta _j^m)\left | \vec h_j^m \right |^2}{2 \left | \vec h_j^m \right |} - \sum_{j=1}^{J}\left | \vec h_j^m \right | \gamma(\theta _j^m) - \sigma\left(\left( x_r^{m+1} - x_r^m \right) - \left( x_l^{m+1} - x_l^m \right)\right) \nonumber \\ 
 &\ge \frac{\varepsilon ^2}{4}\sum_{j=1}^{J}\left | \vec h_j^{m+1} \right | \left(\left(\kappa ^{m+1}\right)^2 \left(\rho_{j-1} \right) + \left(\kappa ^{m+1}\right) ^2 \left(\rho_j\right)\right) - \frac{\varepsilon ^2}{4}\sum_{j=1}^{J}\left | \vec h_j^m \right | \left(\left(\kappa ^m\right)^2 \left(\rho_{j-1} \right) + \left(\kappa ^m\right) ^2 \left(\rho_j\right)\right) \nonumber \\
 &~~~~+\sum_{j=1}^{J}\left | \vec h_j^{m+1} \right | \gamma(\theta _j^{m+1})-\sum_{j=1}^{J}\left | \vec h_j^m \right | \gamma(\theta _j^m) - \sigma\left(\left( x_r^{m+1} - x_r^m \right) - \left( x_l^{m+1} - x_l^m \right)\right) \nonumber \\ 
 &=W_c^{m+1}-W_c^m.
\end{align}
From \eqref{eqn:proof1} and \eqref{eqn:proof2}, we conclude 
\begin{align}
    W_c^{m+1}-W_c^m\leq 0. 
\end{align}
Hence, we complete the proof of the energy stability for the case of $q=0$. 

Subsequently, we consider the case of $q=1$. In this case, from \cite{bao2024structure}, existing a minimal stabilizing function $\mathscr S_0(\theta)$, if $\mathscr S(\theta)\ge \mathscr S_0(\theta)$, we can obtain the following inequality 
\begin{equation}\label{inequality}
\frac{1}{\left | \vec v \right |}\left(\mat{B}_1(\theta)\vec w\right)\cdot(\vec w-\vec v)\ge\left | \vec w \right |\gamma(\hat{\theta})-\left | \vec v \right |\gamma(\theta), 
\end{equation}
where $(-\sin \theta, \cos \theta) = \frac{\vec v}{\left | \vec v \right |}$, $(-\sin \hat{\theta}, \cos \hat{\theta}) = \frac{\vec w}{\left | \vec w \right |}$. Then taking $\phi ^h = \mu ^{m+1}\ttau$ in \eqref{ES-PFEM1}, $\vec\omega ^h = \vec X^{m+1} - \vec X^m$ in \eqref{ES-PFEM2}, and $\varphi ^h = \varepsilon ^2\kappa ^{m+1}\ttau$ in \eqref{ES-PFEM3}, we have 
\begin{align}\label{eqn:proof3}
&\varepsilon ^2\left(\kappa ^{m+1} - \kappa ^m, \kappa ^{m+1}\right)_{\Gamma ^m}^h + \frac{\varepsilon ^2}{2}\left(\left(\kappa ^{m+1}\right)^2\partial_s\vec X^{m+1}, \partial_s\left(\vec X^{m+1} - \vec X^m\right)\right)_{\Gamma ^m}^h + \left( \mat{B}_{1}(\theta ^m)\partial_s\vec X^{m+1}, \partial_s\left(\vec X^{m+1} - \vec X^m\right)\right)_{\Gamma ^m}^h \nonumber \\
&~~~~ - \sigma\left(\left( x_r^{m+1} - x_r^m \right) - \left( x_l^{m+1} - x_l^m \right)\right) = -\ttau\left(\partial_s\mu ^{m+1}, \partial_s \mu ^{m+1}\right)_{\Gamma ^m}^h - \frac{1}{\eta}\left(\frac{ \left(x_l^{m+1} - x_l^m\right)^2}{\ttau} + \frac{\left( x_r^{m+1} - x_r^m \right)^2}{\ttau}\right).
\end{align}
Then, it follows from \eqref{inequality} that 
\begin{align}\label{eqn:proof4}
&\varepsilon ^2\left(\kappa ^{m+1} - \kappa ^m, \kappa ^{m+1}\right)_{\Gamma ^m}^h + \frac{\varepsilon ^2}{2}\left(\left(\kappa ^{m+1}\right)^2\partial_s\vec X^{m+1}, \partial_s\left(\vec X^{m+1} - \vec X^m\right)\right)_{\Gamma ^m}^h \nonumber \\
&~~~~+ \left( \mat{B}_{1}(\theta ^m)\partial_s\vec X^{m+1}, \partial_s\left(\vec X^{m+1} - \vec X^m\right)\right)_{\Gamma ^m}^h 
 - \sigma\left(\left( x_r^{m+1} - x_r^m \right) - \left( x_l^{m+1} - x_l^m \right)\right) \nonumber \\
 &\ge \varepsilon ^2\left(\kappa ^{m+1} - \kappa ^m, \kappa ^{m+1}\right)_{\Gamma ^m}^h + \frac{\varepsilon ^2}{4}\left(\left(\kappa ^{m+1}\right)^2\partial_s\vec X^{m+1}, \partial_s\vec X^{m+1} \right)_{\Gamma ^m}^h + \frac{\varepsilon ^2}{4}\left(\left(\kappa ^{m+1}\right)^2\partial_s\vec X^m, \partial_s\vec X^m \right)_{\Gamma ^m}^h  \nonumber \\
&~~~~ + \sum_{j=1}^{J}\left | \vec h_j^{m+1} \right | \gamma(\theta _j^{m+1})-\sum_{j=1}^{J}\left | \vec h_j^m \right | \gamma(\theta _j^m) - \sigma\left(\left( x_r^{m+1} - x_r^m \right) - \left( x_l^{m+1} - x_l^m \right)\right) \nonumber \\
&\ge \frac{\varepsilon ^2}{4}\sum_{j=1}^{J}\left | \vec h_j^{m+1} \right | \left(\left(\kappa ^{m+1}\right)^2 \left(\rho_{j-1} \right) + \left(\kappa ^{m+1}\right) ^2 \left(\rho_j\right)\right) - \frac{\varepsilon ^2}{4}\sum_{j=1}^{J}\left | \vec h_j^m \right | \left(\left(\kappa ^m\right)^2 \left(\rho_{j-1} \right) + \left(\kappa ^m\right) ^2 \left(\rho_j\right)\right) \nonumber \\
 &~~~~ + \sum_{j=1}^{J}\left | \vec h_j^{m+1} \right | \gamma(\theta _j^{m+1})-\sum_{j=1}^{J}\left | \vec h_j^m \right | \gamma(\theta _j^m) - \sigma\left(\left( x_r^{m+1} - x_r^m \right) - \left( x_l^{m+1} - x_l^m \right)\right) \nonumber \\ 
 &=W_c^{m+1}-W_c^m. 
\end{align}
From \eqref{eqn:proof3} and \eqref{eqn:proof4}, we have 
\begin{align}
    W_c^{m+1}-W_c^m\leq 0. 
\end{align}
Therefore, we conclude the energy stability for the case of $q=1$. 
\end{proof}

\begin{rem}
Define the minimal stabilizing function $\mathscr S_0(\theta)$ as 
\begin{equation}
\mathscr S_0(\theta):=inf\left \{ \mathscr S(\theta)|\bigg[\gamma(\theta)\mat{B}_0(\theta)\,(\cos\hat{\theta}, \sin\hat{\theta})^\top\bigg]\cdot(\cos\hat{\theta}, \sin\hat{\theta})^\top\geq \gamma(\hat{\theta})^2, \forall\hat{\theta}\in[-\pi, \pi] \right \}, \quad\theta\in[-\pi, \pi]. 
\end{equation}
If $\gamma(\theta)=\gamma(\pi+\theta)$ and $\mathscr S(\theta)\ge \mathscr S_0(\theta)$ for $\theta\in[-\pi, \pi]$, we can obtain the result \eqref{eqn:q0_k0} (see Ref. \cite{bao2023symmetrized} for details). 

In addition, 
we define $P_\alpha(\theta, \hat{\theta})$ and $ Q(\theta, \hat{\theta})$ as 
\begin{subequations}
\begin{align}
&P_\alpha(\theta, \hat{\theta}):=2\sqrt{(\gamma(\theta)+\alpha(-\sin\hat{\theta}\cos\theta+\cos\hat{\theta}\sin\theta)^2)\gamma(\theta)}, \quad\forall\theta, \hat{\theta}\in[-\pi, \pi], \quad\alpha\ge 0, \\
&Q(\theta, \hat{\theta}):=\gamma(\hat{\theta})+\gamma(\theta)(\sin\theta\sin\hat{\theta}+\cos\theta\cos\hat{\theta})+\gamma'(\theta)(-\sin\hat{\theta}\cos\theta+\cos\hat{\theta}\sin\theta), \quad\forall\theta, \hat{\theta}\in[-\pi, \pi]. 
\end{align}
\end{subequations}
Then, we define the minimal stabilizing function $\mathscr S_0(\theta)$ as 
\begin{equation}
\mathscr S_0(\theta):=inf\left \{ \alpha\ge 0:P_\alpha(\theta, \hat{\theta})-Q(\theta, \hat{\theta})\ge 0, \forall\hat{\theta}\in[-\pi, \pi] \right \}, \quad\theta\in[-\pi, \pi]. 
\end{equation}
The inequality \eqref{inequality} can be demonstrated, provided that $(-\sin\theta, \cos\theta)^\top=-\frac{\vec v}{\left | \vec v \right |}$ and $(-\sin\hat{\theta}, \cos\hat{\theta})^\top=-\frac{\vec w}{\left | \vec w \right |}$ are nonzero vectors, 
$3\gamma(\theta)>\gamma(\pi + \theta)$ and the stabilizing function holds $\mathscr S(\theta)\ge\mathscr S_0(\theta)$ for $\theta\in[-\pi, \pi]$ (see Ref. \cite{bao2024structure} for details). 
\end{rem}

Except the ES-PFEM, we further propose the following area-conserving parametric finite element approximation (AC-PFEM): Given the initial curve $\Gamma ^0 \in\mathbb X^h$, $\mu ^0 \in\mathbb K^h$ and $\kappa ^0\in\mathbb K_0^h$, find the evolution curve $\Gamma ^{m+1} := \vec X^{m+1}(\cdot) = (x^{m+1}(\cdot), y^{m+1}(\cdot))^\top\in\mathbb X^h$,  chemical potential $\mu^{m+1}(\cdot)\in\mathbb K^h$ and curvature $\kappa ^{m+1}\in\mathbb K_0^h$, such that
\begin{subequations}\label{AC-PFEM}
\begin{align}
&\left(\frac{\vec X^{m+1}-\vec X^m}{\ttau}\cdot\vec n^{m+\frac{1}{2}}, \phi ^h\right)_{\Gamma ^m}^h + \left(\partial_s\mu ^{m+1}, \partial_s \phi ^h\right)_{\Gamma ^m}^h = 0, \quad \forall\phi ^h\in\mathbb K^h,  \label{AC-PFEM1} \\
&\left(\mu ^{m+1}, \vec n^{m + \frac{1}{2}}\cdot\vec\omega ^h\right)_{\Gamma ^m}^h - \left( \mat{B}_{q}(\theta ^m)\partial_s\vec X^{m+1}, \partial_s\vec\omega ^h\right)_{\Gamma ^m}^h - \varepsilon ^2\left(\partial_s\kappa ^{m+1}\vec n^m - \frac{1}{2}\left(\kappa ^{m+1}\right)^2\partial_s\vec X^{m+1}, \partial_s\vec\omega ^h\right)_{\Gamma ^m}^h \nonumber \\
&~~~~-\frac{1}{\eta}\left(\frac{ x_l^{m+1} - x_l^m}{\ttau}\omega_1^h(0) + \frac{ x_r^{m+1} - x_r^m}{\ttau}\omega_1^h(1)\right) + \sigma\left(\omega_1^h(1) - \omega_1^h(0)\right) = 0, \quad \forall\vec\omega ^h\in\mathbb X^h, 
\label{AC-PFEM2}  \\
&\left(\frac{\kappa ^{m+1} - \kappa ^m}{\ttau}, \varphi ^h\right)_{\Gamma ^m}^h - \left(\vec n^m\cdot\partial_s\left(\frac{\vec X^{m+1} - \vec X^m}{\ttau}\right), \partial_s\varphi ^h\right)_{\Gamma ^m}^h \nonumber \\
&~~~~+ \left(\partial_s \vec X^{m+1}\cdot\partial_s\left(\frac{\vec X^{m+1} - \vec X^m}{\ttau}\right)\kappa ^{m+1}, \varphi ^h\right)_{\Gamma ^m}^h = 0, \quad \forall\varphi ^h\in\mathbb K_0^h,  \label{AC-PFEM3}
\end{align}
\end{subequations} 
where $\vec n^{m+\frac{1}{2}}$ is defined by
\begin{equation}
\vec n^{m + \frac{1}{2}} = -\frac{1}{2}\left(\partial_s \vec X^m + \partial_s \vec X^{m+1}\right)^\perp = -\frac{1}{2}\left | \partial_\rho \vec X^m \right |^{-1} \left(\partial_\rho \vec X^m + \partial_\rho \vec X^{m+1}\right)^\perp.
\end{equation}

For the AC-PFEM, we can obtain both the area conservation and energy stability properties, shown in the following theorem. 
\begin{thm}
(Area conservation \& Energy stability) 
Assume $(\vec X^m, \mu ^m, \kappa ^m)\in(\mathbb X^h, \mathbb K^h, \mathbb K_0^h)$ be the solution of the AC-PFEM, and 
define the discretized area $A^m$ enclosed between the curve $\Gamma ^m$ and the substrate as 
\begin{equation}
A^m := \frac{1}{2}\sum_{j=1}^{J}(x_j^m - x_{j-1}^m)(y_j^m + y_{j-1}^m),\qquad m=0,\ldots, M. 
\end{equation}
Then it holds that
\begin{align}
\label{eqn:area-conservation}
A^m \equiv A^0 = \frac{1}{2}\sum_{j=1}^{J}(x_j^0 - x_{j-1}^0)(y_j^0 + y_{j-1}^0), \qquad m=0,\ldots, M. 
\end{align}
Additionally, we can also obtain the energy stability property given in \eqref{w_c^m}. 

\end{thm}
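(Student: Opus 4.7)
The plan is to prove the two conclusions separately: first the area conservation, which is the genuinely new ingredient relative to Theorem~\ref{thm:energy-stability}, and then the energy stability, which will follow by an almost verbatim repetition of the earlier proof.

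For the area conservation, I would start by choosing the constant test function $\phi^h\equiv 1\in\mathbb K^h$ in \eqref{AC-PFEM1}. Since $\partial_s 1=0$, the chemical-potential term drops out and one is left with $\bigl(\vec X^{m+1}-\vec X^m,\vec n^{m+\frac12}\bigr)_{\Gamma^m}^h=0$. The whole point of the Crank--Nicolson midpoint normal $\vec n^{m+\frac12}=-\tfrac12|\partial_\rho\vec X^m|^{-1}(\partial_\rho\vec X^m+\partial_\rho\vec X^{m+1})^\perp$ is to make this inner product an exact discrete time-derivative of the shoelace area. Concretely, I would expand the mass-lumped inner product element by element, use $\vec n^{m+\frac12}|_{\mathbb I_j}=-\tfrac{1}{2|\vec h_j^m|}(\vec h_j^m+\vec h_j^{m+1})^\perp$, so that the $|\vec h_j^m|$ factors cancel, and then verify by direct algebraic manipulation (this is a discrete divergence/trapezoidal identity of the type established in \cite{bao2021structure}) the key identity
\begin{equation*}
\bigl(\vec X^{m+1}-\vec X^m,\vec n^{m+\frac12}\bigr)_{\Gamma^m}^h
=-\tfrac14\sum_{j=1}^{J}\bigl(\Delta\vec X_{j-1}+\Delta\vec X_j\bigr)\cdot\bigl(\vec h_j^m+\vec h_j^{m+1}\bigr)^\perp
=A^{m+1}-A^m,
\end{equation*}
where $\Delta\vec X_j=\vec X^{m+1}_j-\vec X^m_j$. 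Combined with the identity from \eqref{AC-PFEM1} this gives $A^{m+1}=A^m$, and induction delivers \eqref{eqn:area-conservation}. Note that $\vec\omega^h=\vec X^{m+1}-\vec X^m$ remains admissible in $\mathbb X^h$ because the contact-line condition forces $y^{m+1}(0)=y^{m+1}(1)=y^m(0)=y^m(1)=0$.

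For the energy stability, the structure of AC-PFEM differs from ES-PFEM only in that $\vec n^m$ has been replaced by $\vec n^{m+\frac12}$ in \eqref{AC-PFEM1} and in the $\mu$-term of \eqref{AC-PFEM2}; equation \eqref{AC-PFEM3} is unchanged. I would therefore repeat the proof of Theorem~\ref{thm:energy-stability} verbatim with the test choices $\phi^h=\mu^{m+1}\ttau$, $\vec\omega^h=\vec X^{m+1}-\vec X^m$ and $\varphi^h=\varepsilon^2\kappa^{m+1}\ttau$. The crucial observation is that, just as before, the $\bigl((\vec X^{m+1}-\vec X^m)\cdot\vec n^{m+\frac12},\mu^{m+1}\bigr)_{\Gamma^m}^h$ term produced by \eqref{AC-PFEM1} and the $\bigl(\mu^{m+1},\vec n^{m+\frac12}\cdot(\vec X^{m+1}-\vec X^m)\bigr)_{\Gamma^m}^h$ term produced by \eqref{AC-PFEM2} are identical, so they cancel and remove $\mu^{m+1}$ from the combined identity. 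The rest of the argument is unaffected: the Willmore contribution \eqref{eqn:proof2}/\eqref{eqn:proof4} uses only \eqref{AC-PFEM3}, which is unchanged, and the surface energy contribution exploits only the positive-definiteness of $\mat B_0(\theta)$ (for $q=0$) or the monotonicity inequality \eqref{inequality} (for $q=1$). The dissipation terms $-\ttau(\partial_s\mu^{m+1},\partial_s\mu^{m+1})_{\Gamma^m}^h$ and $-\eta^{-1}[(\Delta x_r)^2/\ttau+(\Delta x_l)^2/\ttau]$ appear with the correct sign, and one recovers $W_c^{m+1}\le W_c^m$.

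The main obstacle is purely in the area-conservation step, namely the algebraic verification that the midpoint-normal identity collapses to the shoelace increment $A^{m+1}-A^m$. Once that discrete Reynolds-type formula is in hand, the entire energy-stability argument is a mechanical transcription of the proof of Theorem~\ref{thm:energy-stability}, and no new estimate on $\mathscr S(\theta)$ is required beyond the one used there.
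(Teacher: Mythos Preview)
Your proposal is correct and follows essentially the same approach as the paper: the paper's own proof simply cites \cite{bao2021structure,li2023symmetrized} for the area-conservation identity obtained by testing \eqref{AC-PFEM1} with $\phi^h\equiv1$ and the midpoint normal, and refers back to Theorem~\ref{thm:energy-stability} for the energy stability, which is exactly the plan you describe in more detail.
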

\begin{proof}
The area conservation property can be demonstrated similarly to the method introduced in \cite{bao2021structure,li2023symmetrized}. Meanwhile, the energy stability property follows similarly to Theorem \ref{thm:energy-stability}. For the sake of brevity, detailed proofs are omitted here.
\end{proof}
\begin{rem}
  In this work, we have developed two numerical schemes, namely ES-PFEM and AC-PFEM, both of which can be demonstrated to be energy-stable. Additionally, the AC-PFEM scheme also achieves area conservation. These achievements fill a notable gap in this area of research. Furthermore, through extensive numerical experiments presented in the next section, we show that compared to the numerical methods for the model without Willmore energy, the schemes we have constructed exhibit excellent mesh quality. In the near future, we will further study the energy-stable parameter finite element approximation algorithms for axisymmetric regularized solid-state dewetting problems and three-dimensional regularized solid-state dewetting problems.
\end{rem}
\section{An iterative solver}\label{sec5}

Since the ES-PFEM and AC-PFEM are both fully-implicit, to get the solutions of ES-PFEM and AC-PFEM, we use the Newton-Raphson iteration. 

For the ES-PFEM, we initialize the approximation $(\vec X^{m+1, 0}, \mu^{m+1, 0}, \kappa^{m+1, 0})$ with the solution from the previous time step $(\vec X^m, \mu^m, \kappa^m)$. In each iteration $i$, using the current approximation $(\vec X^{m+1, i}, \mu^{m+1, i}, \kappa^{m+1, i})$, the next approximation $(\vec X^{m+1, i+1}, \mu^{m+1, i+1}, \kappa^{m+1, i+1})$ is computed by 
\begin{subequations}
\begin{align}
&\left(\frac{\vec X^{m+1, i+1} - \vec X^m}{\ttau}\cdot \vec n^m, \phi ^h \right)_{\Gamma^m}^h + \left(\partial_s\mu ^{m+1, i+1}, \partial_s\phi ^h\right)_{\Gamma^m}^h = 0, \quad \forall\phi^h\in \mathbb K^h, \\
&\left(\mu ^{m+1, i+1}, \vec n^m\cdot\vec\omega ^h\right)_{\Gamma^m}^h - \left(\mat B_q(\theta ^m)\partial_s\vec X^{m+1, i+1}, \partial_s\vec\omega ^h\right)_{\Gamma^m}^h - \varepsilon ^2\left(\partial_s\kappa ^{m+1, i+1}\vec n^m, \partial_s\vec\omega ^h\right)_{\Gamma^m}^h \nonumber \\
&~~~~ + \varepsilon ^2\left(\kappa ^{m+1, i+1}\kappa ^{m+1, i}\partial_s\vec X^{m+1, i}, \partial_s\vec\omega ^h\right)_{\Gamma ^m}^h + \varepsilon ^2\left(\frac{1}{2}\left(\kappa ^{m+1, i}\right)^2\partial_s\vec X^{m+1, i+1}, \partial_s\vec\omega ^h\right)_{\Gamma^m}^h - \varepsilon ^2\left(\left(\kappa ^{m+1, i}\right)^2\partial_s\vec X^{m+1, i}, \partial_s\vec\omega ^h\right)_{\Gamma^m}^h \nonumber \\
&~~~~-\frac{1}{\eta}\left(\frac{ x_l^{m+1, i+1} - x_l^m}{\ttau}\omega_1^h(0) + \frac{ x_r^{m+1, i+1} - x_r^m}{\ttau}\omega_1^h(1)\right) + \sigma\left(\omega_1^h(1) - \omega_1^h(0)\right) = 0, \quad \forall\vec\omega ^h\in\mathbb X^h, \\
&\left(\frac{\kappa ^{m+1, i+1} - \kappa ^m}{\ttau}, \varphi ^h\right)_{\Gamma ^m}^h - \left(\vec n^m\cdot\partial_s\left(\frac{\vec X^{m+1, i+1} - \vec X^m}{\ttau}\right), \partial_s\varphi ^h\right)_{\Gamma ^m}^h - \frac{2}{\ttau}\left(\left(\partial_s\vec X^{m+1, i}\right)^2\kappa ^{m+1, i},\varphi ^h\right)_{\Gamma^m}^h \nonumber \\
&~~~~ + \frac{2}{\ttau}\left(\partial_s\vec X^{m+1, i+1}\partial_s\vec X^{m+1, i}\kappa ^{m+1, i},\varphi ^h\right)_{\Gamma^m}^h + \frac{1}{\ttau}\left(\left(\partial_s\vec X^{m+1, i}\right)^2\kappa ^{m+1, i+1},\varphi ^h\right)_{\Gamma^m}^h + \frac{1}{\ttau}\left(\partial_s\vec X^{m+1, i}\partial_s\vec X^m\kappa ^{m+1, i}, \varphi ^h\right)_{\Gamma^m}^h \nonumber \\
&~~~~- \frac{1}{\ttau}\left(\partial_s\vec X^{m+1, i+1}\partial_s \vec X^m\kappa ^{m+1, i}, \varphi ^h\right)_{\Gamma^m}^h - \frac{1}{\ttau}\left(\partial_s\vec X^{m+1, i}\partial_s \vec X^m\kappa ^{m+1, i+1}, \varphi ^h\right)_{\Gamma ^m}^h = 0, \quad \forall\varphi ^h\in\mathbb K_0^h.
\end{align}
\end{subequations}

For the AC-PFEM, we initialize the approximation $(\vec X^{m+1, 0}, \mu^{m+1, 0}, \kappa^{m+1, 0})$ with the solution from the previous time step $(\vec X^m, \mu^m, \kappa^m)$. In each iteration $i$, by combining $\vec n^{m + \frac{1}{2}} = -\frac{1}{2}\left(\partial_s \vec X^m + \partial_s \vec X^{m+1}\right)^\perp$, using the current approximation $(\vec X^{m+1, i}, \mu^{m+1, i}, \kappa^{m+1, i})$, the next approximation $(\vec X^{m+1, i+1}, \mu^{m+1, i+1}, \kappa^{m+1, i+1})$ is computed by 
\begin{subequations}
\begin{align}
&-\frac{1}{2}\left(\frac{\vec X^{m+1, i+1} - \vec X^m}{\ttau}\cdot \partial_s\vec X^m, \phi ^h\right)_{\Gamma^m}^h + 
\frac{1}{2\ttau}\left(\vec X^m\partial_s\vec X^{m+1, i+1}, \phi ^h\right)_{\Gamma ^m}^h  
+ \frac{1}{2\ttau}\left(\vec X^{m+1, i}\partial_s\vec X^{m+1, i}, \phi ^h\right)_{\Gamma ^m}^h
\nonumber \\
&~~~~ - \frac{1}{2\ttau}\left(\vec X^{m+1, i+1}\partial_s\vec X^{m+1, i}, \phi ^h\right)_{\Gamma ^m}^h - \frac{1}{2\ttau}\left(\vec X^{m+1, i}\partial_s\vec X^{m+1, i+1}, \phi ^h\right)_{\Gamma ^m}^h + 
\left(\partial_s\mu ^{m+1, i+1}, \partial_s\phi ^h\right)_{\Gamma^m}^h 
= 0, \quad \forall\phi^h\in \mathbb K^h, \\
&-\frac{1}{2}\left(\mu ^{m+1, i+1}, \partial_s\vec X^m\cdot\vec\omega ^h\right)_{\Gamma^m}^h 
 + \frac{1}{2}\left(\mu ^{m+1, i}, \partial_s\vec X^{m+1, i}\cdot \vec\omega ^h\right)_{\Gamma ^m}^h
 - \frac{1}{2}\left(\mu ^{m+1, i+1}, \partial_s\vec X^{m+1, i}\cdot\vec\omega ^h\right)_{\Gamma ^m}^h \nonumber \\
&~~~~ - \frac{1}{2}\left(\mu ^{m+1, i}, \partial_s\vec X^{m+1, i+1}\cdot\vec \omega ^h\right)_{\Gamma ^m}^h
- \left(\mat B_q(\theta ^m)\partial_s\vec X^{m+1, i+1}, \partial_s\vec\omega ^h\right)_{\Gamma^m}^h - \varepsilon ^2\left(\partial_s\kappa ^{m+1, i+1}\vec n^m, \partial_s\vec\omega ^h\right)_{\Gamma^m}^h \nonumber \\
&~~~~ + \varepsilon ^2\left(\kappa ^{m+1, i+1}\kappa ^{m+1, i}\partial_s\vec X^{m+1, i}, \partial_s\vec\omega ^h\right)_{\Gamma ^m}^h + \varepsilon ^2\left(\frac{1}{2}\left(\kappa ^{m+1, i}\right)^2\partial_s\vec X^{m+1, i+1}, \partial_s\vec\omega ^h\right)_{\Gamma^m}^h - \varepsilon ^2\left(\left(\kappa ^{m+1, i}\right)^2\partial_s\vec X^{m+1, i}, \partial_s\vec\omega ^h\right)_{\Gamma^m}^h \nonumber \\
&~~~~-\frac{1}{\eta}\left(\frac{ x_l^{m+1, i+1} - x_l^m}{\ttau}\omega_1^h(0) + \frac{ x_r^{m+1, i+1} - x_r^m}{\ttau}\omega_1^h(1)\right) + \sigma\left(\omega_1^h(1) - \omega_1^h(0)\right) = 0, \quad \forall\vec\omega ^h\in\mathbb X^h, \\
&\left(\frac{\kappa ^{m+1, i+1} - \kappa ^m}{\ttau}, \varphi ^h\right)_{\Gamma ^m}^h - \left(\vec n^m\cdot\partial_s\left(\frac{\vec X^{m+1, i+1} - \vec X^m}{\ttau}\right), \partial_s\varphi ^h\right)_{\Gamma ^m}^h - \frac{2}{\ttau}\left(\left(\partial_s\vec X^{m+1, i}\right)^2\kappa ^{m+1, i},\varphi ^h\right)_{\Gamma^m}^h \nonumber \\
&~~~~ + \frac{2}{\ttau}\left(\partial_s\vec X^{m+1, i+1}\partial_s\vec X^{m+1, i}\kappa ^{m+1, i},\varphi ^h\right)_{\Gamma^m}^h + \frac{1}{\ttau}\left(\left(\partial_s\vec X^{m+1, i}\right)^2\kappa ^{m+1, i+1},\varphi ^h\right)_{\Gamma^m}^h + \frac{1}{\ttau}\left(\partial_s\vec X^{m+1, i}\partial_s\vec X^m\kappa ^{m+1, i}, \varphi ^h\right)_{\Gamma^m}^h \nonumber \\
&~~~~- \frac{1}{\ttau}\left(\partial_s\vec X^{m+1, i+1}\partial_s \vec X^m\kappa ^{m+1, i}, \varphi ^h\right)_{\Gamma^m}^h - \frac{1}{\ttau}\left(\partial_s\vec X^{m+1, i}\partial_s \vec X^m\kappa ^{m+1, i+1}, \varphi ^h\right)_{\Gamma ^m}^h = 0, \quad \forall\varphi ^h\in\mathbb K_0^h. 
\end{align}
\end{subequations}

The iterative process is repeated until the following convergence criteria is satisfied:
\begin{align}
    &\left\|\vec X^{m+1,i+1}-\vec X^{m+1,i}\right\|_{L^\infty}+ \left\|\mu^{m+1,i+1}-\mu^{m+1,i}\right\|_{L^\infty}+\left\|\kappa^{m+1,i+1}-\kappa^{m+1,i}\right\|_{L^\infty}\leq \text{tol}, 
\end{align}
where tol is a predefined tolerance level, ensuring that the iterative process continues until the solution reaches a specified accuracy.

\section{Numerical results}
\label{sec6}
In this section, we will perform numerical simulations based on the ES-PFEM and AC-PFEM presented above to investigate regularized solid-state dewetting of thin films in strongly anisotropic materials. 
These simulations will reveal the structure-preserving properties (energy stability, area conservation), convergence, mesh quality, and efficiency of the proposed methods. In the numerical experiments, we choose two different anisotropic energy functions: 
\begin{itemize}
    \item 2-fold anisotropy: $\gamma(\theta)=1+\beta\cos(2\theta)$;
    \item 4-fold anisotropy: $\gamma(\theta)=1+\beta\cos(4\theta)$;
\end{itemize}
where $\beta$ denotes the degree of anisotropy. For the $k$-fold anisotropy, when $\beta = 0$, it represents isotropic; when $0 \le \beta \le\frac{1}{k^2-1}$, it represents weakly anisotropic; when $\beta\ge\frac{1}{k^2-1}$, it represents strongly anisotropic. 
During all the tests, we select the tolerance tol$=1e-8$. 

\textbf{Example 1} (Convergence tests) We test convergence by quantifying the difference between surfaces enclosed by the two curves $\Gamma_1$ and $\Gamma_2$, using the manifold distance defined by 
\begin{equation*}
\text{Md}(\Gamma_1, \Gamma_2):=\left |(\Omega_1\backslash\Omega_2)\cup(\Omega_2\backslash\Omega_1) \right |=\left |\Omega_1 \right |+\left |\Omega_2 \right |-2\left |\Omega_1\cap\Omega_2 \right |, \nonumber
\end{equation*}
with $\Omega_i$, $i=1, 2$ denoting the region enclosed by $\Gamma_i$, and $| \cdot |$ representing the area of the region. Let $\vec X^m$ denote numerical approximation of surface with mesh size $h$ and time step $\ttau$, then introduce approximate solution between interval $[t_m, t_{m+1}]$ as
\begin{equation}
\vec X_{h, \ttau}(\rho, t)=\frac{t-t_m}{\ttau}\vec X^m(\rho)+\frac{t_m-t}{\ttau}\vec X^{m+1}(\rho), \quad\rho\in\mathbb{I}. 
\end{equation}
Then we further define the errors by  
\begin{equation}
e_{h, \ttau}(t)=\text{Md}(\Gamma_{h, \ttau}, \Gamma_{\frac{h}{2}, \frac{\ttau}{4}}), \quad\tilde{e}_{h, \ttau}(t)=\text{Md}(\Gamma_{h, \ttau}, \Gamma_{\frac{h}{2}, \ttau}). 
\end{equation} 
In the tests, we choose the ellipse as the initial shape, with the major axis $2$ and minor axis $1$. 
The numerical errors and orders of the ES-PFEM-method and AC-PFEM-method with 2-fold and 4-fold anisotropies are shown in Figures \ref{fig:1} and \ref{fig:2}, respectively. 
We can observe that both numerical methods exhibit an order of $O(\tau + h^2)$ for various anisotropic cases.

\begin{figure}[!htp]
\centering
\includegraphics[width=0.49\textwidth]{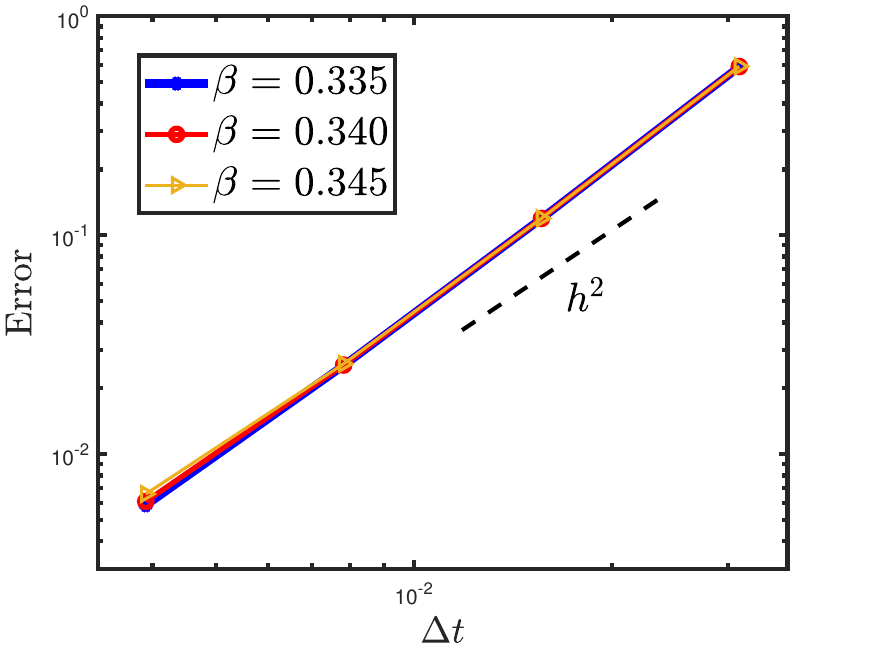}
\includegraphics[width=0.49\textwidth]{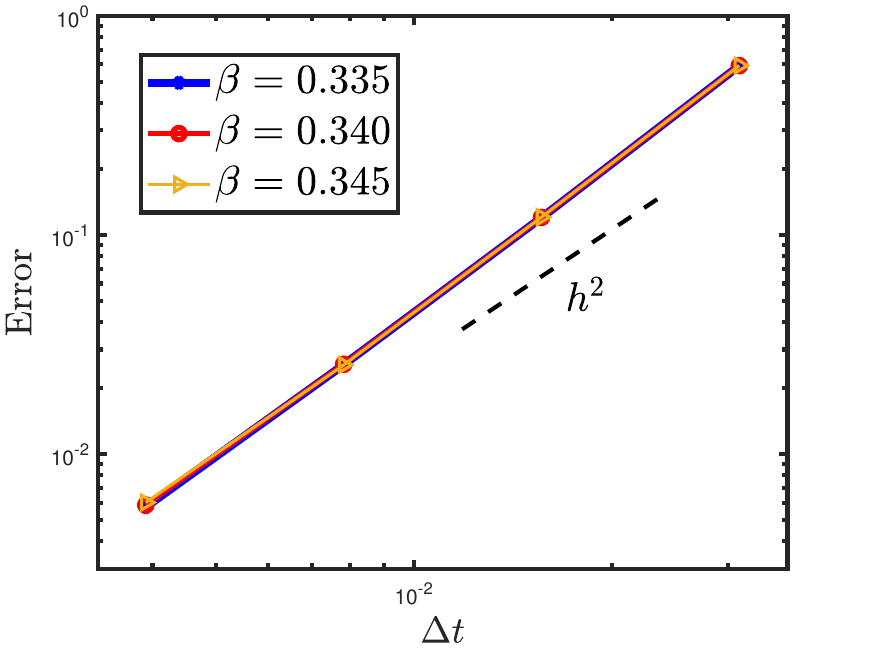}
\caption{Plot of numberical errors employing ES-PFEM at $t_m = 1$(left panel) and $t_m = 2$(right panel) for $2$-fold anisotropy. The parameters are selected as $\eta = 100$, $\sigma = -0.6$. } 
\label{fig:1}
\end{figure}

\begin{figure}[!htp]
\centering
\includegraphics[width=0.49\textwidth]{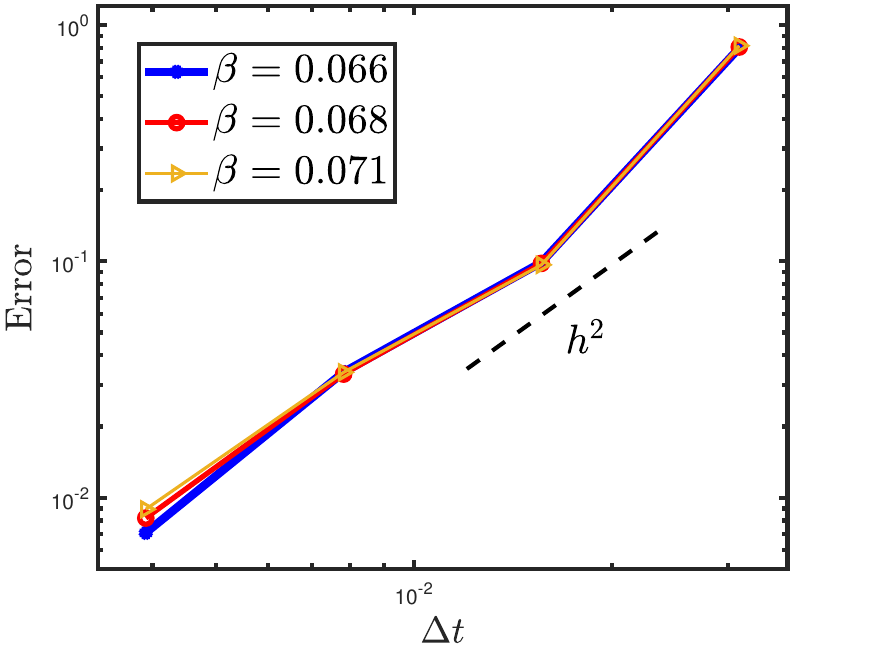}
\includegraphics[width=0.49\textwidth]{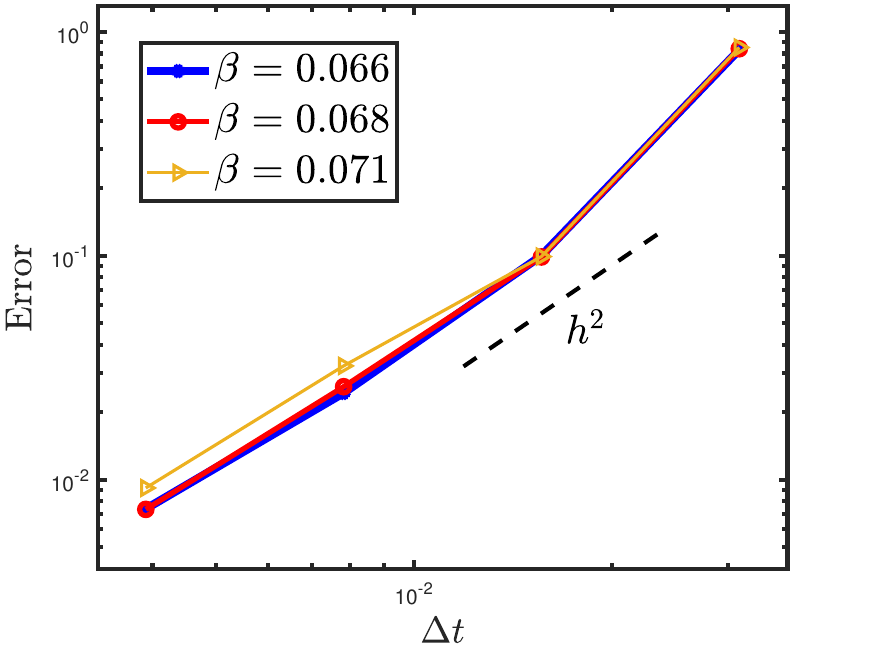}
\caption{Plot of numberical errors employing ES-PFEM at $t_m = 1$ (left panel) and $t_m = 2$ (right panel) for $4$-fold anisotropy. The parameters are selected as $\eta = 100$, $\sigma = -0.6$. }
\label{fig:2}
\end{figure}

\textbf{Example 2}: (Energy stability \& Area conservation) 
In this example, we primarily study the variation of energy during the evolution process with different types of anisotropy $\gamma (\theta)$, and the area loss using ES-PFEM and AC-PFEM methods. 
We choose the semi-ellipse as the initial data, the same as in Example 1, and the discretization parameters are set to $J = 256$ and $\tau = \frac{5}{256}$. We select contact line mobility $\eta = 100$ and cosine value of the Young contact angle
$\sigma = -0.6$. 
The numerical results are presented in Figures \ref{fig:3}-\ref{fig:5}. We can observe energy stability for both the ES-PFEM and AC-PFEM, and mass conservation for the AC-PFEM, which are consistent with our theoretical proof. As shown in Figure \ref{fig:3}, the equilibrium energy decreases as the anisotropy strengthens. In Figures \ref{fig:4}-\ref{fig:5}, besides noting the area-preserving characteristic of the AC-PFEM, we can also observe that the energy of the ES-PFEM algorithm near equilibrium is lower than that of the AS-PFEM algorithm, possibly due to the loss of area.

\begin{figure}[!htp]
\centering
\includegraphics[width=0.45\textwidth]{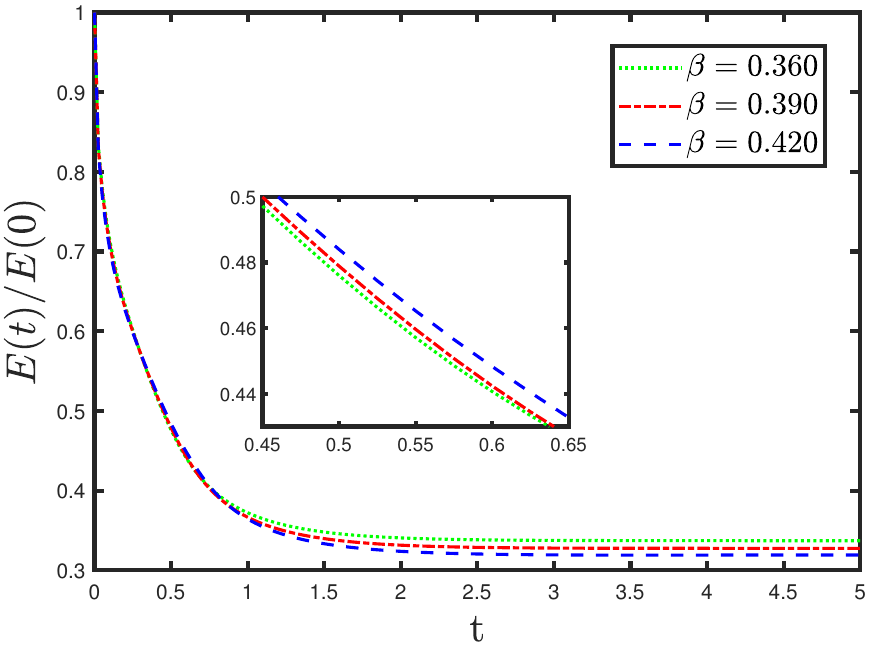}\hspace{0.5cm}
\includegraphics[width=0.45\textwidth]{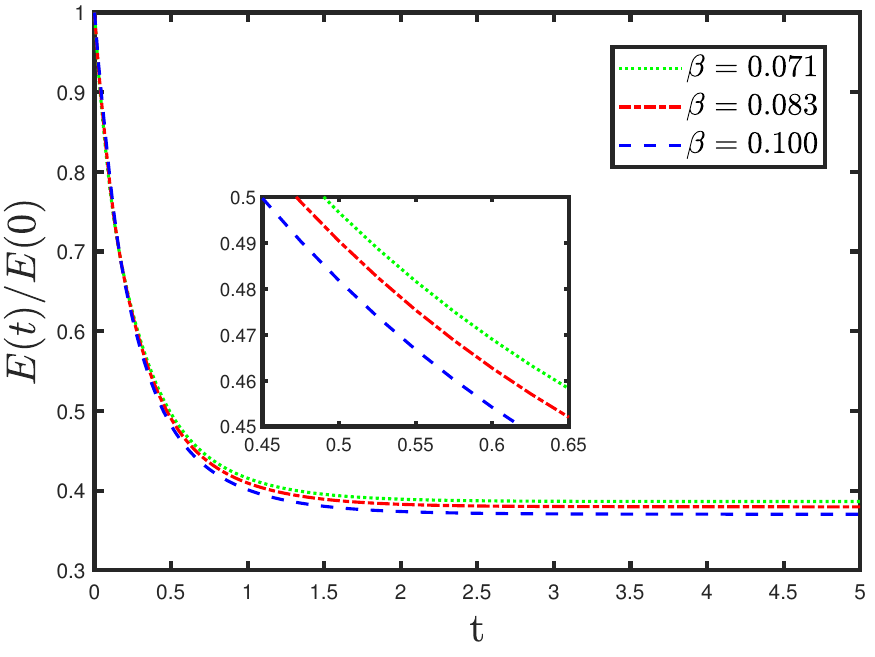}
\caption{The time history of the energy $E(t)/E(0)$ using ES-PFEM with $2$-fold (left panel) and $4$-fold (right panel) anisotropy. }
\label{fig:3}
\end{figure}

\begin{figure}[!htp]
\centering
\includegraphics[width=0.3\textwidth]{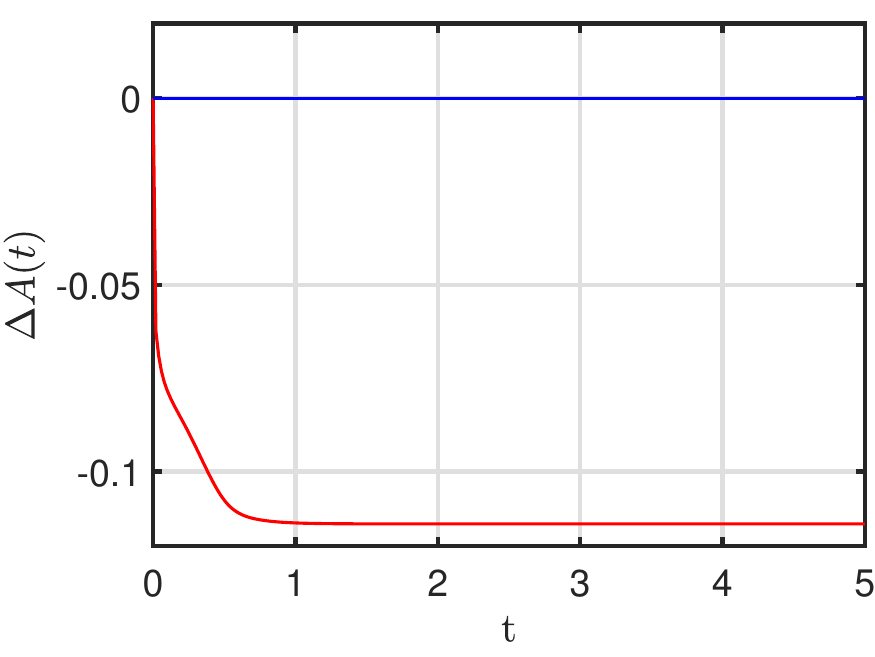}\hspace{0.5cm}
\includegraphics[width=0.3\textwidth]{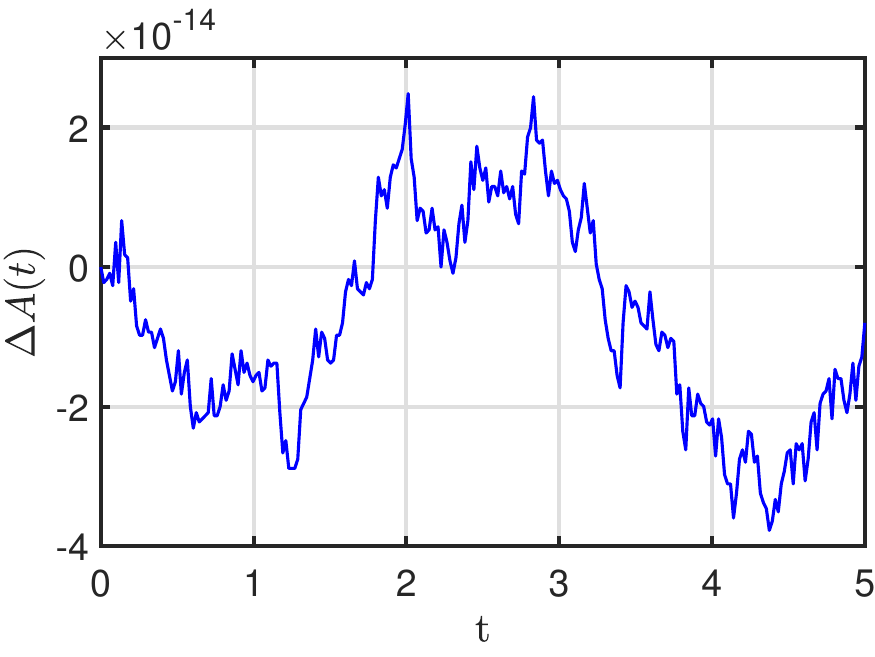}\hspace{0.5cm}
\includegraphics[width=0.3\textwidth]{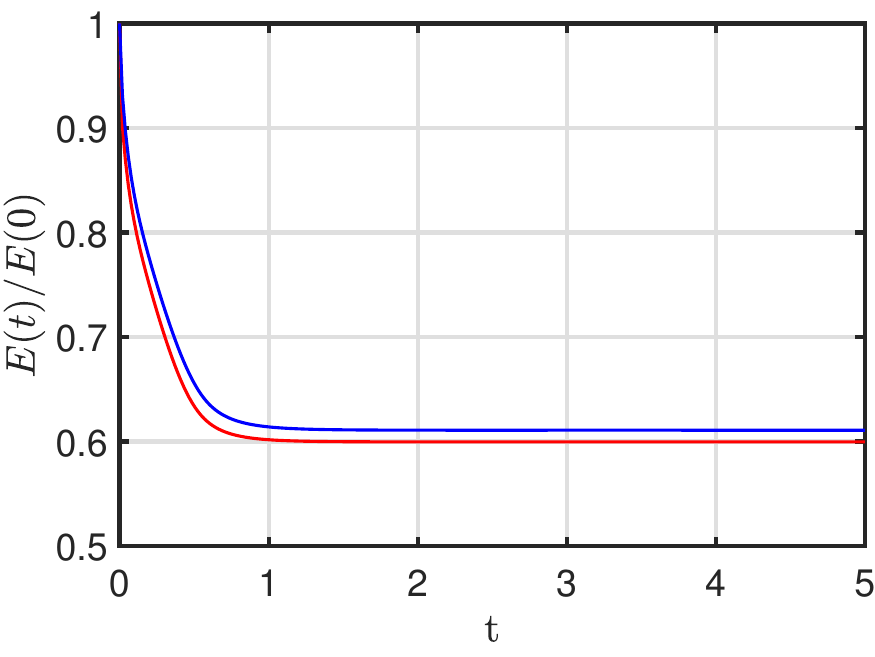}
\caption{The time history of the area loss $\triangle A(t)$ and the energy $E(t)/E(0)$. The blue line represents the results obtained using the AC-PFEM, while the red line represents the results obtained using the ES-PFEM for $2$-fold anisotropy. The degree of anisotropy is chosen as $\beta = 9/24$. }
\label{fig:4}
\end{figure}

\begin{figure}[!htp]
\centering
\includegraphics[width=0.3\textwidth]{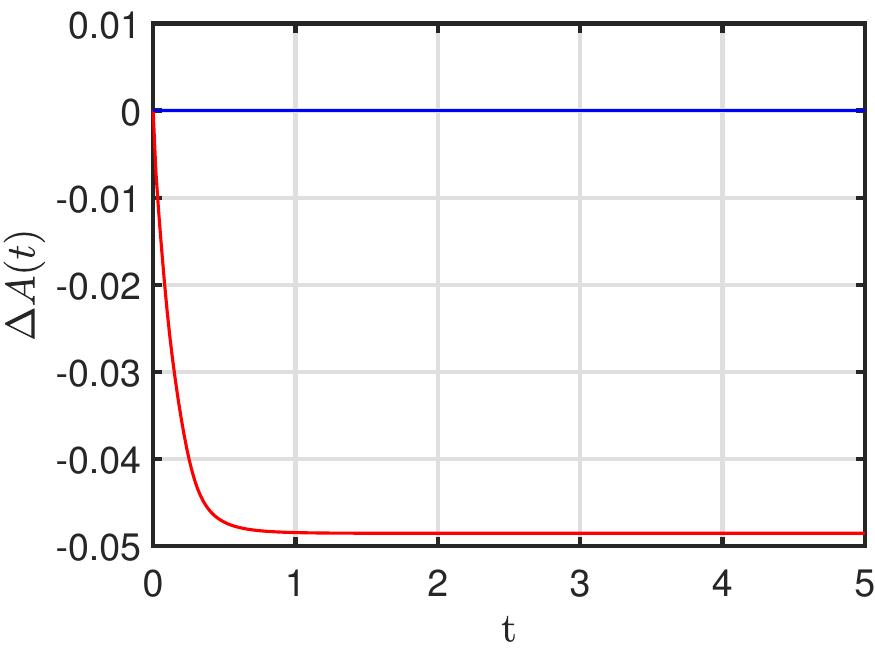}\hspace{0.5cm}
\includegraphics[width=0.3\textwidth]{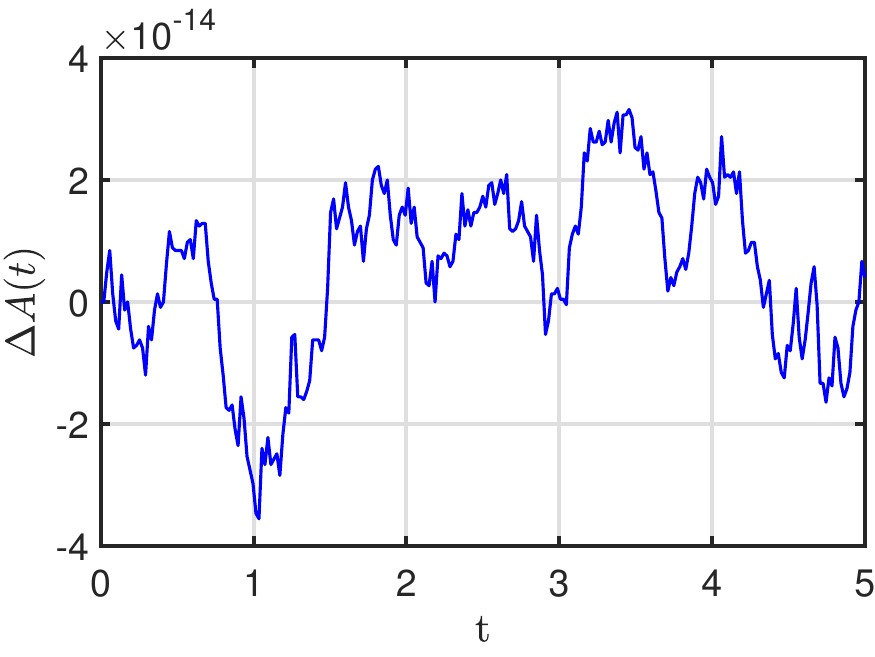}\hspace{0.5cm}
\includegraphics[width=0.3\textwidth]{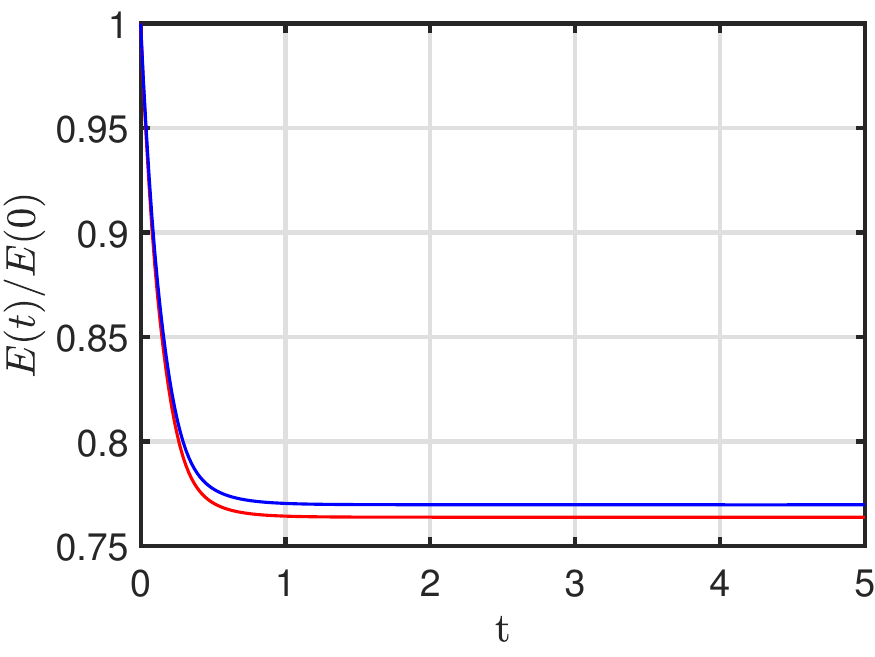}
\caption{The time history of the area loss $\triangle A(t)$ and the energy $E(t)/E(0)$. The blue line represents the results obtained using the AC-PFEM, while the red line represents the results obtained using the ES-PFEM for $4$-fold anisotropy. The degree of anisotropy is chosen as $\beta = 1/10$. }
\label{fig:5}
\end{figure}

\textbf{Example 3}: (Mesh quality)
In this example, we mainly check the mesh quality during the evolution process for the ES-PFEM. 
We choose the semi-ellipse as the initial data and and contact line mobility $\eta = 100$ and cosine value of the Young contact angle
$\sigma = -0.6$. 
For both 2-fold and 4-fold anisotropy under strongly anisotropic parameters \(\beta\), when \(\varepsilon = 0\), the ES-PFEM degenerates into the PFEM without the regularization term. As observed in Figure \ref{fig:6}, the mesh ratio of the ES-PFEM with \(\varepsilon = 0\) increases significantly over time, indicating a substantial deterioration in mesh quality.
When we slightly increase \(\varepsilon\) to \(5e-3\) and then to \(1e-2\), we can see a significant improvement in mesh quality. This indicates that adding the Willmore regularization term is very beneficial for the mesh quality during the numerical evolution process of the ES-PFEM.

Figure \ref{fig:7} compares the mesh ratios of the ES-PFEM for 2-fold weakly anisotropic solid-state dewetting models, with and without the Willmore regularization term. It can be seen that the mesh ratios of the two cases do not differ significantly. Additionally, Figure \ref{fig:8} compares the mesh ratios of the ES-PFEM for 2-fold strongly anisotropic systems with and without the Willmore regularization term. The comparison reveals that the ES-PFEM method with the Willmore regularization term can significantly improve mesh quality. Figures \ref{fig:9} and \ref{fig:10} show similar conclusions for the 4-fold anisotropy. These numerical experiments demonstrate that incorporating the Willmore regularization term significantly improves mesh quality for the numerical methods of strongly anisotropic solid-state dewetting models, underscoring its necessity.

\begin{figure}[!htp]
\centering
\includegraphics[width=0.45\textwidth]{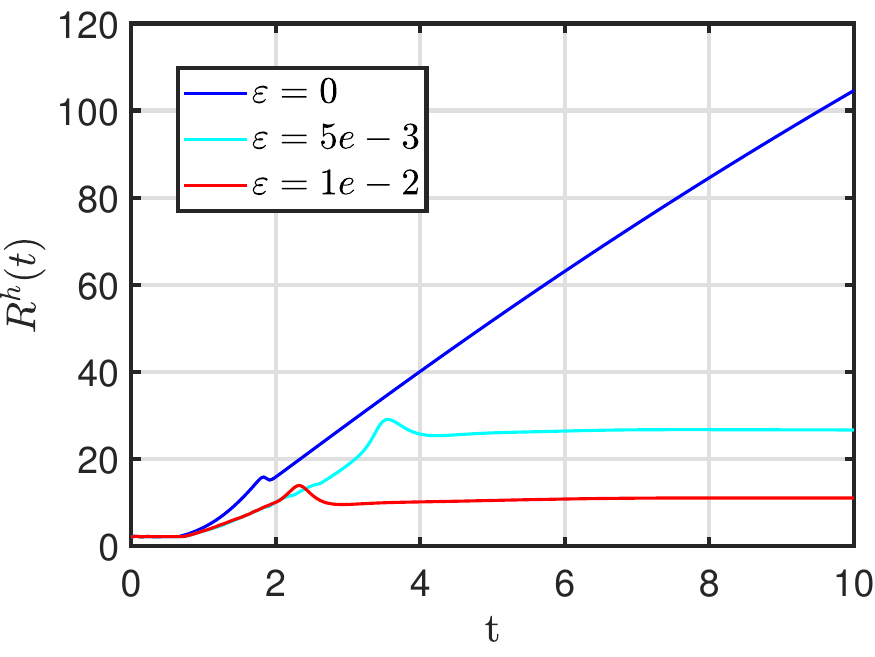}\hspace{0.5cm}
\includegraphics[width=0.45\textwidth]{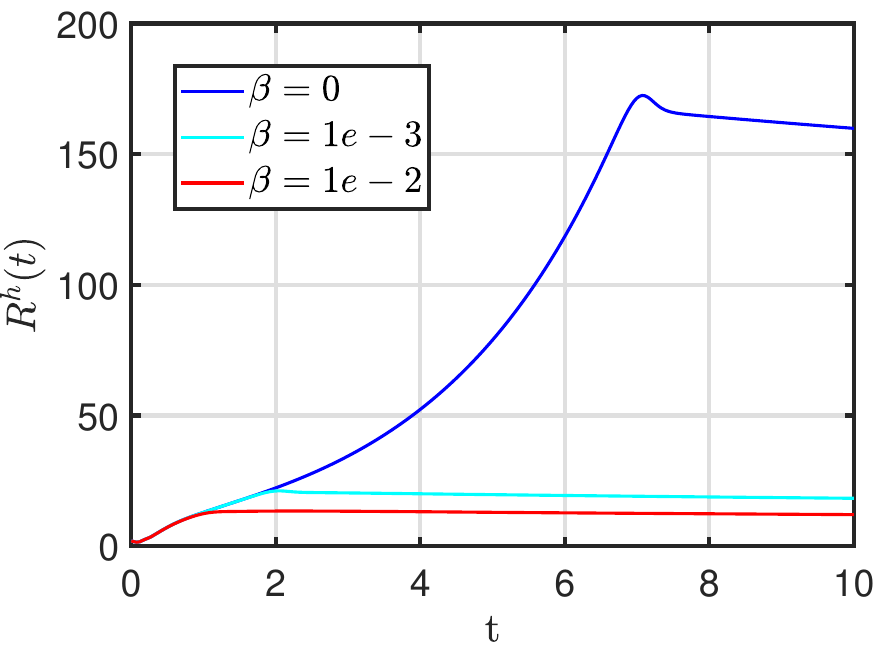}
\caption{Time evolution of the mesh ratio $R^h(t)$ for the two cases of $2$-fold (left panel) and $4$-fold (right panel).The parameters are chosen as $\ttau = 5/128$, $J = 128$, $\eta = 100$, and $\sigma = -0.6$. }
\label{fig:6}
\end{figure}

\begin{figure}[!htp]
\centering
\includegraphics[width=0.45\textwidth]{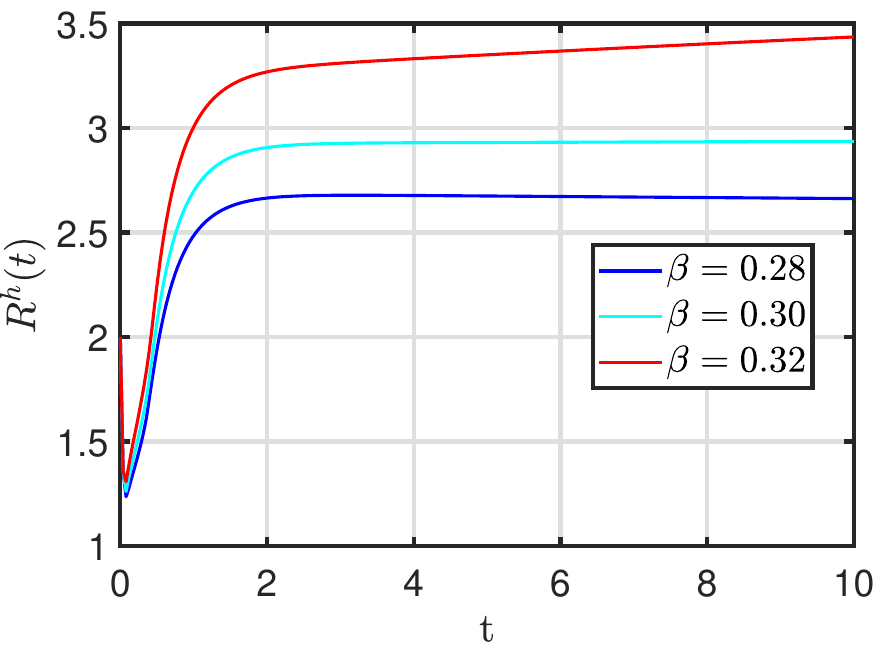}\hspace{0.5cm}
\includegraphics[width=0.45\textwidth]{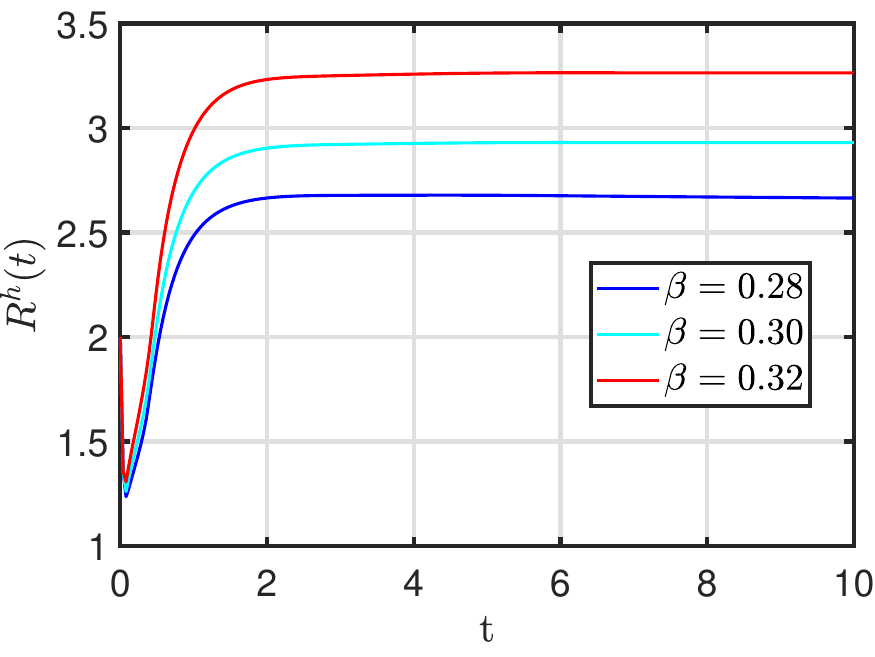}
\caption{Time evolution of the mesh ratio $R^h(t)$ without the Willmore regularization term (left panel) and with the Willmore regularization term (right panel) for $2$-fold with weak anisotropy. The other parameters are chosen as $\ttau = 5/128$, $J = 128$, $\eta = 100$, and $\sigma = -0.6$. }
\label{fig:7}
\end{figure}

\begin{figure}[!htp]
\centering
\includegraphics[width=0.45\textwidth]{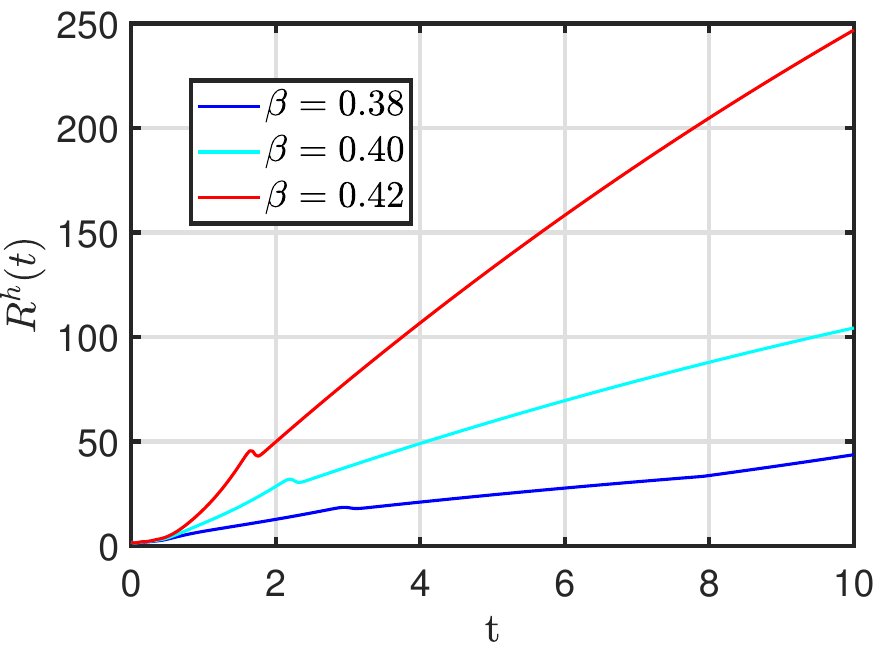}\hspace{0.5cm}
\includegraphics[width=0.45\textwidth]{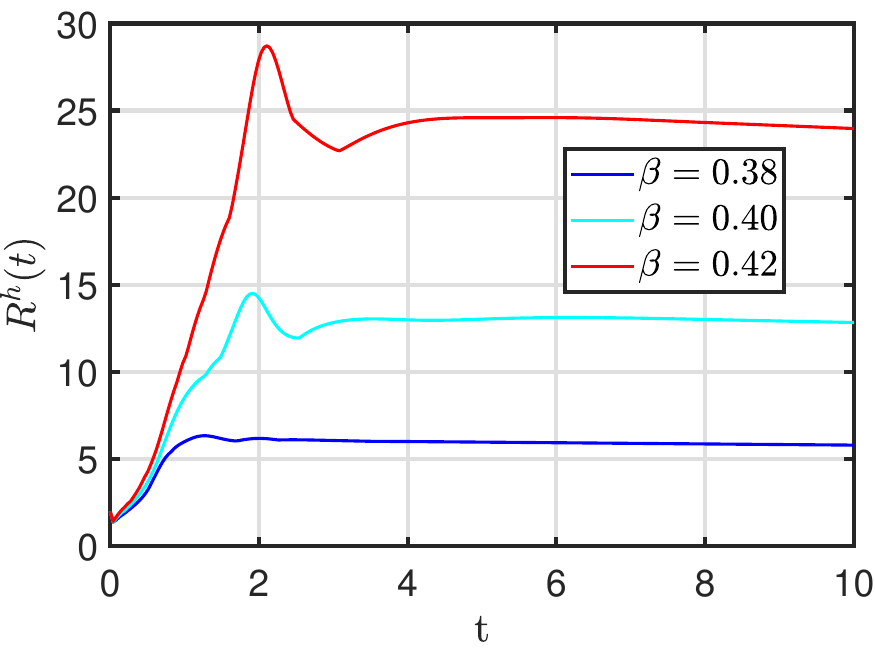}
\caption{Time evolution of the mesh ratio $R^h(t)$ without the Willmore regularization term (left panel) and with the Willmore regularization term (right panel) for $2$-fold with strong anisotropy. The other parameters are chosen as $\ttau = 5/128$, $J = 128$, $\eta = 100$, and $\sigma = -0.6$. }
\label{fig:8}
\end{figure}

\begin{figure}[!htp]
\centering
\includegraphics[width=0.45\textwidth]{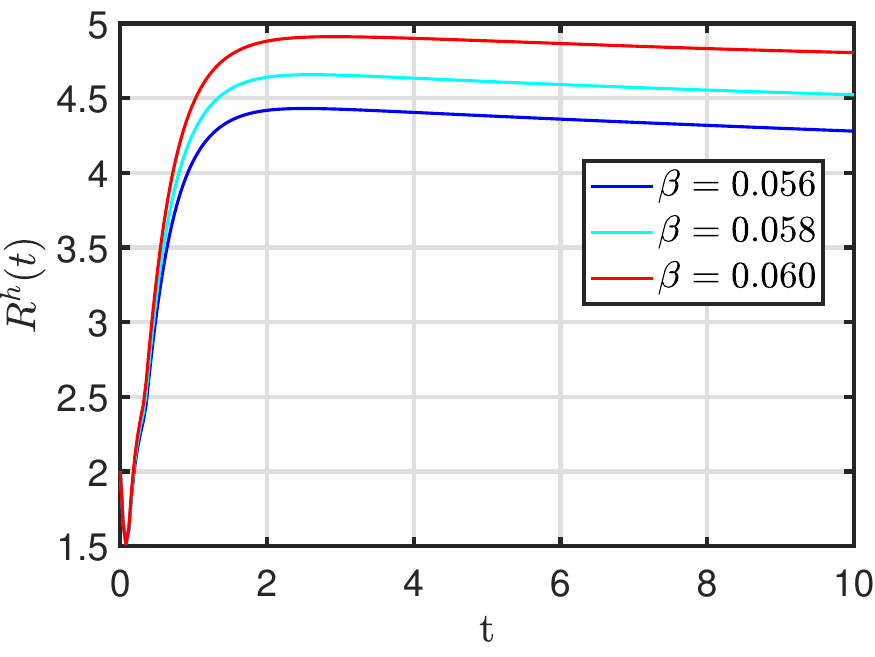}\hspace{0.5cm}
\includegraphics[width=0.45\textwidth]{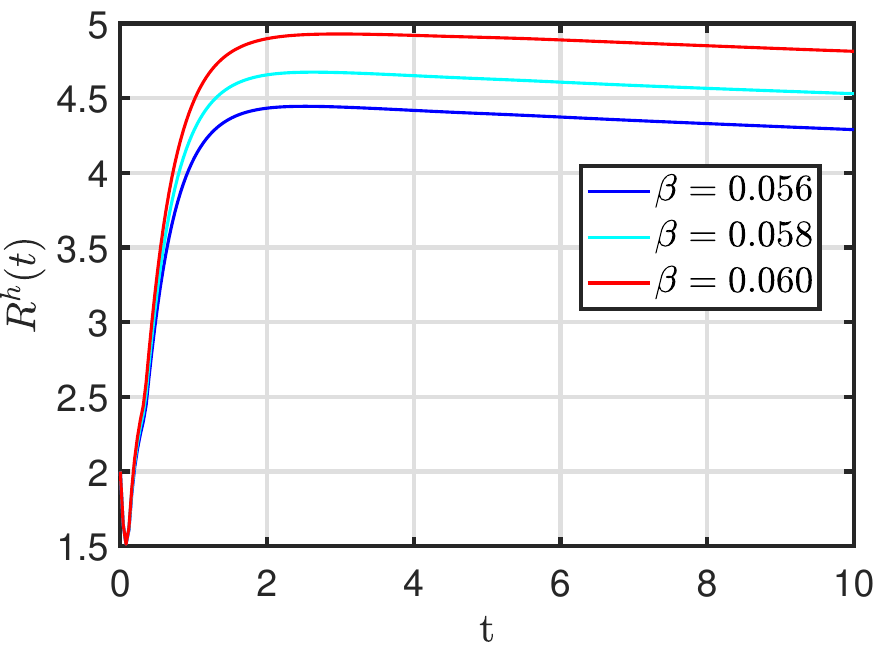}
\caption{Time evolution of the mesh ratio $R^h(t)$ without the Willmore regularization term (left panel) and with the Willmore regularization term (right panel) for $4$-fold with weak anisotropy. The other parameters are chosen as $\ttau = 5/128$, $J = 128$, $\eta = 100$, and $\sigma = -0.6$. }
\label{fig:9}
\end{figure}

\begin{figure}[!htp]
\centering
\includegraphics[width=0.45\textwidth]{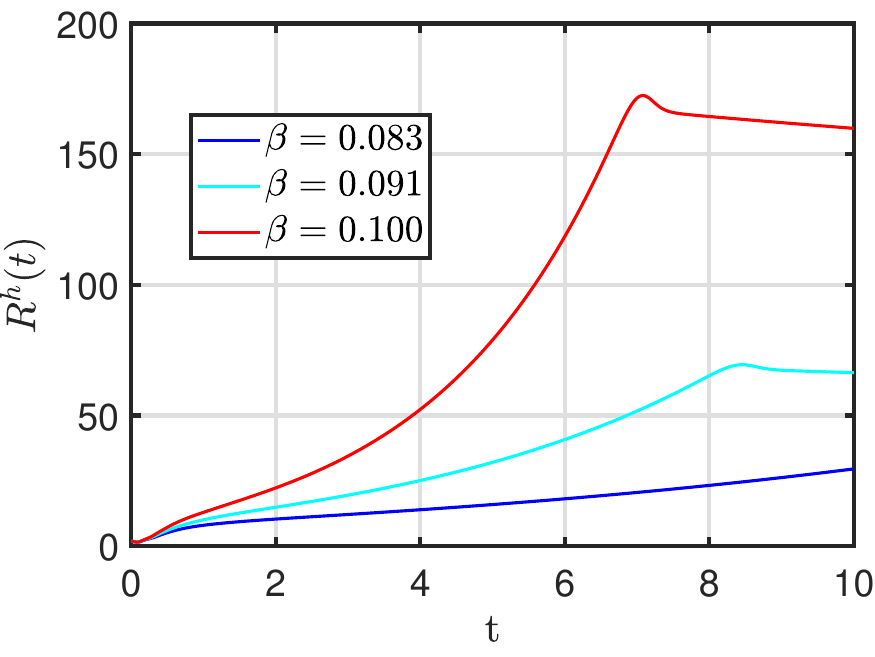}\hspace{0.5cm}
\includegraphics[width=0.45\textwidth]{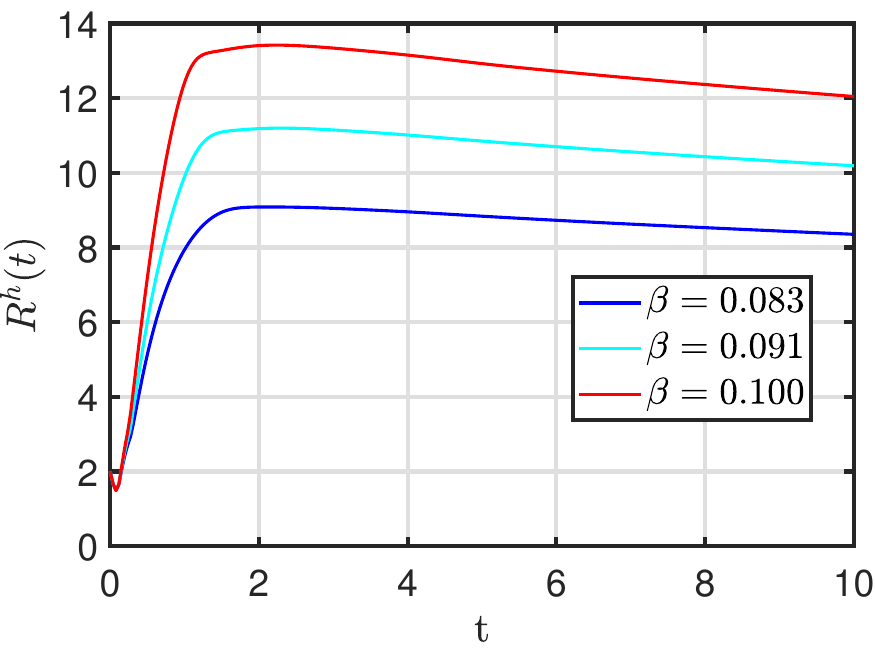}
\caption{Time evolution of the mesh ratio $R^h(t)$ without the Willmore regularization term (left panel) and with the Willmore regularization term (right panel) for $4$-fold with strong anisotropy. The other parameters are chosen as $\ttau = 5/128$, $J = 128$, $\eta = 100$, and $\sigma = -0.6$. }
\label{fig:10}
\end{figure}

\textbf{Example 4}: (Equilibrium state \& Pinch-off)
In this example, we mainly consider the intrinsic mechanisms during the evolution of the thin film and observe the evolution process of the thin film as it reaches equilibrium. 
In Figures \ref{fig:11}-\ref{fig:12}, we illustrate several evolution processes based on the ES-PFEM for the regularized system with different initial curves, tracking their progression until equilibrium is reached. We can observe that different initial curves with the same anisotropy exhibit the same equilibrium shape. It can also be observed that the thin film exhibits a pinch-off phenomenon when it is very flat. Over time, the thin film splits into several smaller films, which eventually reach equilibrium as time progresses. 

\begin{figure}[!htp]
\centering
\includegraphics[width=0.3\textwidth]{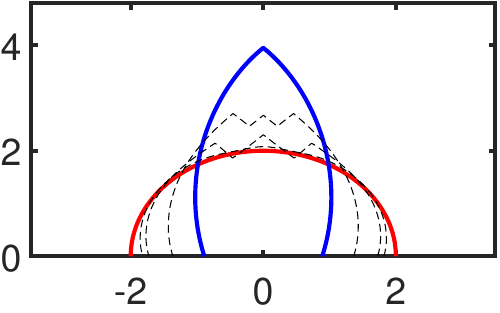}\hspace{0.5cm}
\includegraphics[width=0.3\textwidth]{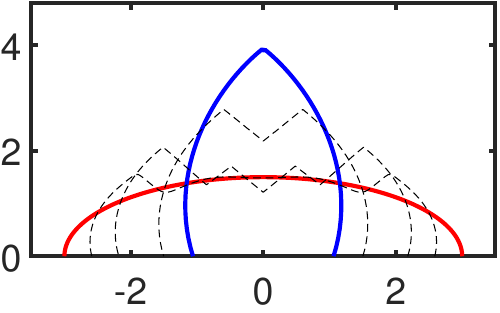}\hspace{0.5cm}
\includegraphics[width=0.3\textwidth]{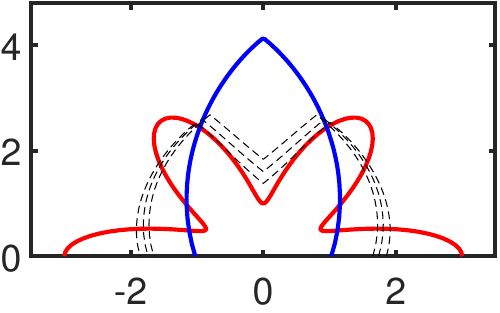}
\caption{The evolution of different initial curve (red line) to the equilibrium shape (blue line) with $2$-fold anisotropy. The degree of anisotropy are chosen as $\beta = 1/2$. The other parameters are selected as $\ttau = 1/50$, $J = 128$, $\eta = 100$, and $\sigma = -0.6$. }
\label{fig:11}
\end{figure}

\begin{figure}[!htp]
\centering
\includegraphics[width=0.3\textwidth]{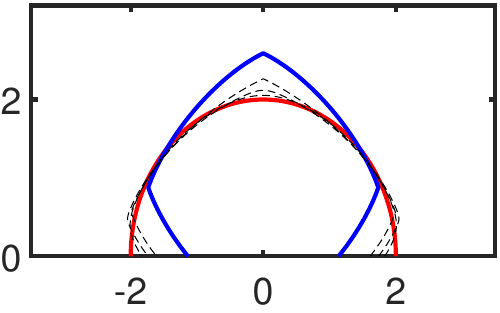}\hspace{0.5cm}
\includegraphics[width=0.3\textwidth]{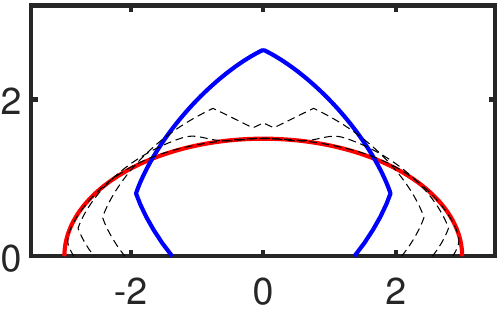}\hspace{0.5cm}
\includegraphics[width=0.3\textwidth]{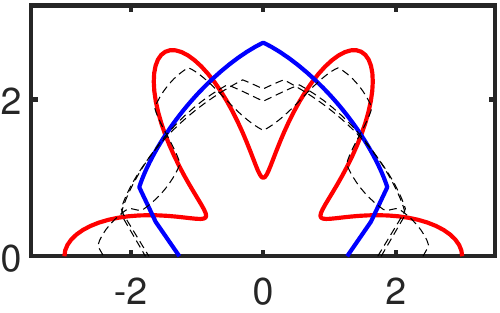}
\caption{The evolution of different initial curve (red line) to the equilibrium shape (blue line) with $4$-fold anisotropy. The degree of anisotropy are chosen as $\beta = 1/10$. The other parameters are selected as $\ttau = 1/200$, $J = 128$, $\eta = 100$, and $\sigma = -0.6$. }
\label{fig:12}
\end{figure}

\begin{figure}[!htp]
\centering
\includegraphics[width=0.3\textwidth]{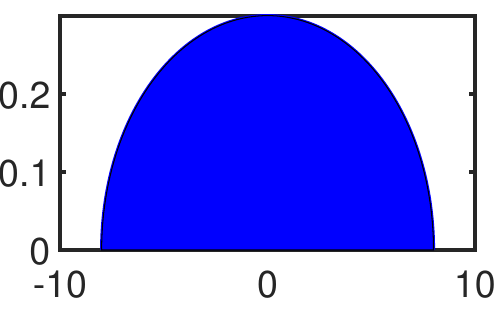}\hspace{0.5cm}
\includegraphics[width=0.3\textwidth]{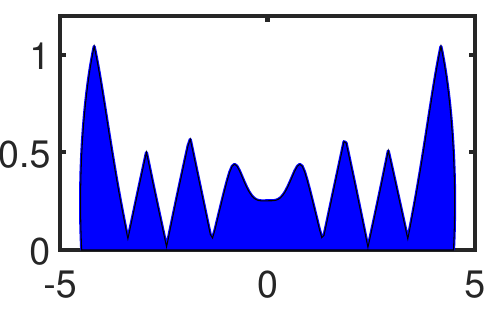}\hspace{0.5cm}
\includegraphics[width=0.3\textwidth]{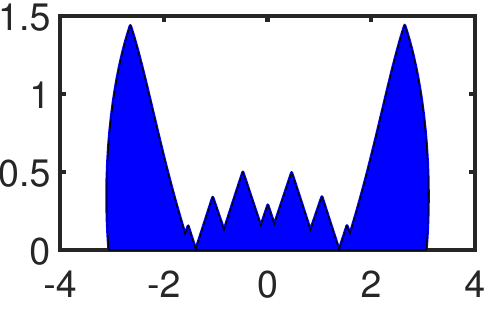}\hspace{0.5cm}
\includegraphics[width=0.3\textwidth]{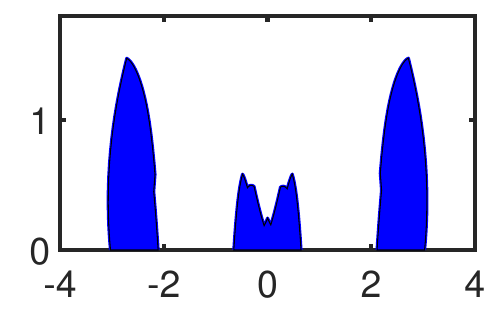}\hspace{0.5cm}
\includegraphics[width=0.3\textwidth]{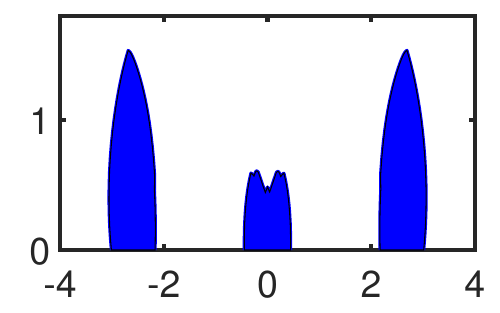}\hspace{0.5cm}
\includegraphics[width=0.3\textwidth]{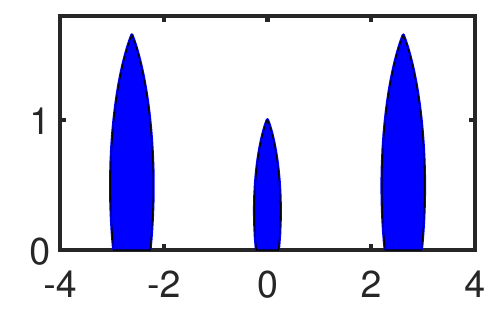}
\caption{Several snapshots in the evolution of a long, flat initial curve with $2$-fold anisotropy. The degree of anisotropy are chosen as $\beta = 1/2$. The other parameters are selected as $\ttau = 1/50$, $J = 200$, $\eta = 100$, and $\sigma = -0.6$. }
\label{fig:13}
\end{figure}

\begin{figure}[!htp]
\centering
\includegraphics[width=0.3\textwidth]{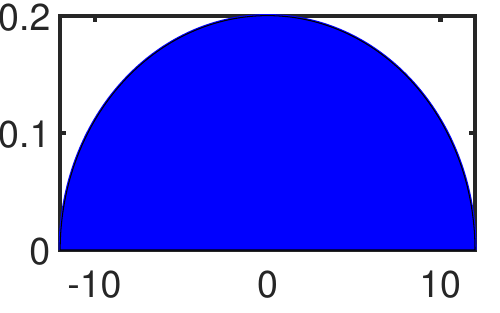}\hspace{0.5cm}
\includegraphics[width=0.3\textwidth]{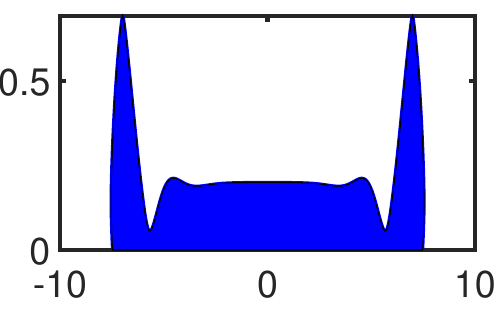}\hspace{0.5cm}
\includegraphics[width=0.3\textwidth]{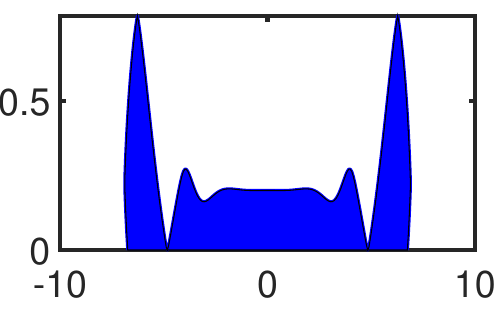}\hspace{0.5cm}
\includegraphics[width=0.3\textwidth]{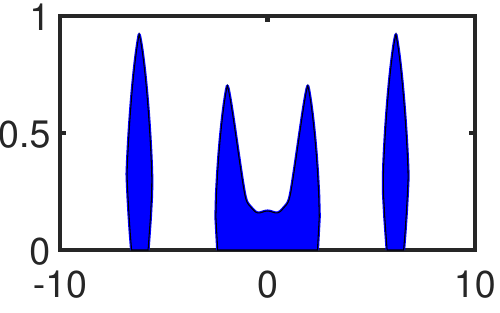}\hspace{0.5cm}
\includegraphics[width=0.3\textwidth]{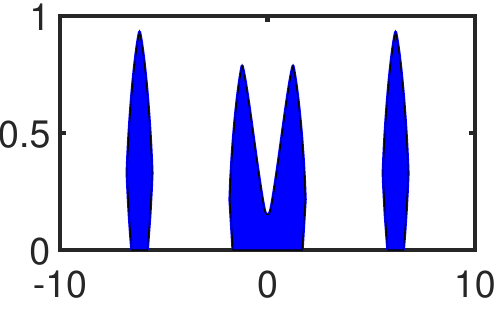}\hspace{0.5cm}
\includegraphics[width=0.3\textwidth]{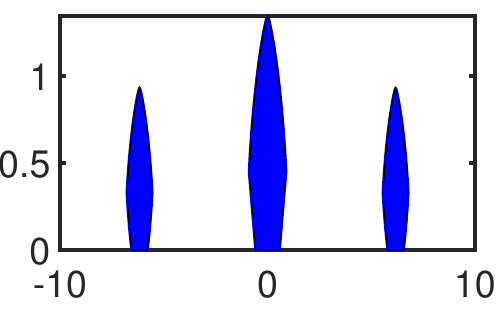}
\caption{Several snapshots in the evolution of a long, flat initial curve with $4$-fold anisotropy. The degree of anisotropy are chosen as $\beta = 1/10$. The other parameters are selected as $\ttau = 1/50$, $J = 200$, $\eta = 100$, and $\sigma = -0.6$. }
\label{fig:14}
\end{figure}
\section{Conclusions}\label{sec7}
In this work, we developed energy-stable parametric finite element approximations, including ES-PFEM and AC-PFEM, for the regularized solid-state dewetting model with strong anisotropies. 
The incorporation of the Willmore regularization term, which ensures the well-posedness of the model, was the primary motivation for constructing the regularized system considered in this work.
The key technique in our approach is the introduction of two geometric relations, inspired by recent work \cite{bao2024energy}, which allows us to establish an equivalent regularized sharp-interface model. Our detailed proof of the energy stability of the numerical scheme addresses a gap in the relevant theory.
Numerical simulations demonstrate the accuracy and efficiency of the proposed methods, revealing several advantageous properties. More importantly, extensive numerical simulations show that our schemes provide better mesh quality and are more suitable for long-term computations. 
In the future, we plan to further investigate energy-stable parametric finite element approximations for axisymmetric and three-dimensional regularized solid-state dewetting problems.

\bibliographystyle{elsarticle-num}
\bibliography{thebib}
\end{document}